\newtheorem{thm}{Theorem}[section]
\newtheorem{lem}[thm]{Lemma}
\newtheorem{prop}[thm]{Proposition}
\theoremstyle{definition}
\theoremstyle{remark}
\newtheorem{rem}[thm]{Remark}
\numberwithin{equation}{section}
\begin{document}

\title[Bounds for the loss probability]
{Bounds for the loss probability in large loss queueing systems
}%
\author{Vyacheslav M. Abramov}%
\address{School of Mathematical Sciences, Monash University, Clayton
Campus,
Wellington road, Victoria-3800, Australia}%
\email{vyacheslav.abramov@sci.monash.edu.au}%

\subjclass{60E15, 60K25}%
\keywords{Classes of probability distributions, Inequalities,
Kolmogorov's metric, Loss queueing systems, Empirical distribution
function, Continuity of queueing systems}%

\begin{abstract}
Let $\mathcal{G}(\frak{g}_1,\frak{g}_2)$ be the class of all
probability distribution functions of positive random variables
having the given first two moments $\frak{g}_1$ and $\frak{g}_2$. Let
$G_1(x)$ and $G_2(x)$ be two probability distribution functions of
this class satisfying the condition $|G_1(x)-G_2(x)|<\epsilon$ for
some small positive value $\epsilon$ and let $\widehat{G}_1(s)$ and,
respectively, $\widehat{G}_2(s)$ denote their Laplace-Stieltjes
transforms. For real $\mu$ satisfying $\mu\frak{g}_1>1$ let
us denote by $\gamma_{G_1}$ and $\gamma_{G_2}$ the least positive
roots of the equations $z=\widehat{G}_1(\mu-\mu z)$ and
$z=\widehat{G}_2(\mu-\mu z)$ respectively. In the paper, the upper
bound for $|\gamma_{G_1}-\gamma_{G_2}|$ is derived. This upper bound
is then used to find lower and upper bounds for the loss probabilities
in different large loss queueing systems.
\end{abstract}
\maketitle
\section{Introduction}\label{sec1}
In most of stochastic models studied analytically in the literature the
probability distribution functions of their random characteristics are
assumed to be known. In queueing problems, for example, the input
characteristics are the distributions of interarrival and service
times, and they are clearly described in the formulation of a
problem. For example in the case of an $M/G/1$ queueing system,
the arrival process is usually assumed to be Poisson of rate
$\lambda$, and service time distribution is assumed to be a given function
$B(x)$, with mean $1/\mu$ and other moments if required. This
enables us to use the techniques of the Laplace-Stieltjes
transforms or generating functions to obtain the desired output
characteristics.

In practice, however, the distribution of an interarrival or service
time is unknown. It can be only \textit{approximated} by available information about
that distribution, and the accuracy of that approximation can be
obtained by analysis of real observations.

The problems of modeling, approximating and estimating the output
characteristics of queueing systems are a well-known, and it is a
well-established and distinguished area of queueing theory. There is
a wide literature related to this subject. To mention only a few
papers that use different approaches, we refer Bareche and Aissani
\cite{Bareche and Aissani 2008}, Kalashnikov \cite{Kalashnikov 2002}
and van Dijk and Miyazawa \cite{van Dijk and Miyazawa 2004}. Bareche
and Aissani \cite{Bareche and Aissani 2008} used the strong
stability method to study the error of approximation of $GI/M/1$ or
$M/GI/1$ queueing systems by that $M/M/1$, when the distribution of
inter-arrival time or, respectively, service time is unknown but in
the certain sense (that defined in that paper) close to the
exponential distribution. Kalashnikov \cite{Kalashnikov 2002}
studied stochastic sequences satisfying the recurrence relation
$V_{n+1}=F(V_n,\xi_n)$, where $\xi_n$ was a sequence of independent
and identically distributed finite-dimensional random vectors. By
replacing the original sequence $\{\xi_n\}$ by ``perturbed" sequence
$\{\xi_n^\prime\}$, under the assumption that a specially defined
weighted distance between $\xi_n$ and $\xi_n^\prime$ is given, that
weighted distance between $V_n$ and $V_{n}^\prime$, where
$V_{n+1}^\prime=F(V_n^\prime,\xi_n^\prime)$, has been studied. Van
Dijk and Miyazawa \cite{van Dijk and Miyazawa 2004} studied
non-exponential queues such as $GI/GI/1/n$, $M/GI/c/n$ and
$GI/M/c/n$.  For the $GI/GI/1/n$ queueing system they demonstrated the
influence of an error in the service time distribution on the resulting error in
different performance measures such as the throughout of the system. They
also established the error bounds for the throughout of the
$M/GI/c/n$ queue, and obtained similar results for the $GI/M/c/n$ queue
with a perturbation of the interarrival time distribution.

In the present paper, we study the class $\mathcal{G}(\frak{g}_1,\frak{g}_2)$ of probability distribution functions of positive random variables having the given first two moments $\frak{g}_1$ and $\frak{g}_2$.
We establish the bounds for the least
positive root of the functional equation $z=\widehat{G}(\mu-\mu
z)$, where $\widehat{G}(s)$ is the Laplace-Stieltjes transform of
an unknown probability distribution function $G(x)$ belonging to the class $\mathcal{G}(\frak{g}_1,\frak{g}_2)$, and $\mu$ is a positive parameter satisfying the
condition $\mu\frak{g}_1>1$. The additional information
characterizing $G(x)$ is that
\begin{equation}\label{1.2}
\mathcal{K}(G,F):=\sup_{x>0}|G(x)-F(x)|<\epsilon,
\end{equation}
where $F(x)$ is known probability distribution function of a positive random variable having the same two moments (i.e. belonging to the class $\mathcal{G}(\frak{g}_1,\frak{g}_2)$ as well), and the least positive root, $\gamma_F$, of the functional equation $z=\widehat{F}(\mu-\mu z)$ is therefore known ($\widehat{F}(s)$ denotes the Laplace-Stieltjes transform of $F(x)$). The metric $\mathcal{K}(G, F)$ is known as the uniform
(Kolmogorov's) metric (e.g. \cite{Kalashnikov and Rachev 1990},
\cite{Rachev 1991}).

The aforementioned bounds for the least positive root $\gamma_G$ of the
functional equation $z=\widehat{G}(\mu-\mu z)$ (or similar functional equations)
are then used in
asymptotic analysis of the loss probability in certain queueing
systems with the large number of waiting places.

There are two areas of applications where these bounds are used. They are
 \textit{statistics} of queueing
systems and \textit{continuity} of queueing systems.

In statistical problems, the empirical
probability distribution ${G}_{\mathrm{emp}}(x,N)$ ($N$ is the number of observations)
 is assumed to be known. If the
number of observations increases to infinity, then for any given
positive value $\epsilon$ the probability
$\mathrm{P}\left\{\sup_{x>0}\Big|{G}_{\mathrm{emp}}(x,N)-G(x)\Big|<\epsilon\right\}$
approaches 1.

More exact information about this probability is given by Kolmogorov's theorem (see Kolmogoroff \cite{Kolmogorov} or Tak\'acs \cite{Takacs 1967}, p.170). Namely,
\begin{equation*}
\lim_{N\to\infty}\mathrm{P}\left\{\sup_{x\geq0}\big|{G}_{\mathrm{emp}}(x,N)-G(x)\big|\leq
\frac{z}{\sqrt{N}}\right\}=K(z),
\end{equation*}
where
$$
K(z)=\begin{cases}\sum\limits_{j=-\infty}^{+\infty}(-1)^j\mathrm{e}^{-2j^{2}z^{2}}, &\mbox{for} \ z>0,\\
0, &\mbox{for} \ z\leq0.
\end{cases}
$$
So, the probability of
\begin{equation}\label{sp}\sup_{x\geq0}|G(x)-{G}_{\mathrm{emp}}(x,N)|<\epsilon,\end{equation}
can be asymptotically evaluated when $N$ is large and $\epsilon$ is small. (For relevant studies associated with statistics \eqref{sp} or other related statistics see the book of Tak\'acs \cite{Takacs 1967}.)

The first and second moments of $G(x)$ are usually unknown either.
However, for large $N$, they can be taken approximately to the
empirical moments of ${G}_{\mathrm{emp}}(x,N)$ with some error. It
will be shown below (see Theorem \ref{thm1}, rel. \eqref{T1} and
Remark \ref{rem1}) that if in relation \eqref{sp} the value $\epsilon$ is small enough,
then the bounds for the least positive root $\gamma_G$ are expressed via the
first moment $\frak{g}_1$ only. In this case the only error of the
empirical mean is to be taken into account. Thus, in the motivation of
assumption \eqref{sp}, the value $\epsilon$ is assumed to be chosen
such small that the probability of \eqref{sp} should be large on the one hand,
and the bounds for the empirical mean
should be small on the other hand.

In continuity problems, we assume that the unknown probability
distribution $G(x):=\mathrm{P}\{\zeta\leq x\}$ with the expectation
$\frak{g}_1:=\frac{1}{\lambda}$ satisfies some specific properties
such as
$$
\sup_{x>0, y>0}\big|G(x)-\mathrm{P}\{\zeta\leq
x+y|\zeta>y\}\big|<\epsilon.
$$
Then, according to the known characterization theorem of Azlarov
and Volodin (see \cite{Azlarov and Volodin 1986} or \cite{Abramov
2008}), we have
$$
\sup_{x>0}\big|G(x)-(1-\mathrm{e}^{-\lambda x})\big|<2\epsilon.
$$
For other related continuity problems see \cite{Abramov 2008}, where Kolmogorov's metric is used for
continuity analysis of the $M/M/1/n$ queueing system.

The class of probability distributions functions
$\mathcal{G}(\frak{g}_1, \frak{g}_2)$ itself, i.e. without
metrical condition \eqref{1.2}, has been studied by Vasilyev
and Kozlov \cite{Vasilyev and Kozlov 1969} and Rolski \cite{Rolski
1972}. Rolski \cite{Rolski 1972} has established the bounds for
the least positive root of the functional equation
$z=\widehat{G}(\mu-\mu z)$.

In the present paper, we show that additional condition
\eqref{1.2} nontrivially improves the earlier bounds obtained by
Rolski \cite{Rolski 1972}. The new bounds have various applications. For example, the upper and lower
asymptotic bounds can be obtain for the loss probabilities in $M/GI/1/n$, $GI/M/1/n$
and $GI/M/m/n$ queueing systems with large capacity $n$ as well as
in many related models of telecommunication systems (see \cite{Abramov
1997}, \cite{Abramov 2002}, \cite{Abramov 2004}, \cite{Abramov
2007}, \cite{Abramov 2008b} and \cite{Abramov 2008c}).
We demonstrate application of this theory for the $GI/M/1/n$
queueing system with large buffer capacity $n$, for the $M/GI/1$ buffer system with two types of losses \cite{Abramov 2004} and then for the special
buffers model with batch service and priorities \cite{Abramov 2008b}. The last two of the mentioned applications have especial importance for telecommunication systems.
We also establish new continuity results for the loss probability
in the $M/M/1/n$ queueing systems with large capacity $n$ under
special assumptions related to interarrival times. The
 continuity theorems for $M/M/1/n$
queueing systems, where the buffer capacity $n$ is fixed, have been established in
\cite{Abramov 2008}. Statistical analysis of $M/GI/1/n$ and $GI/M/1/n$ loss systems with fixed buffer capacity $n$ based on Kolmogorov's statistics has been provided in \cite{Abramov 2009}.

The paper is structured as follows. In Section 2, properties
of distributions belonging to the class $\mathcal{G}(\frak{g}_1,
\frak{g}_2)$ and satisfying additional condition \eqref{1.2} are
studied. Let $G_1(x)$ and $G_2(x)$ be arbitrary probability
distribution functions of this class satisfying \eqref{1.2}, i.e. $G_1, G_2\in\mathcal{G}(\frak{g}_1,\frak{g}_2)$, and $\mathcal{K}(G_1,G_2)<\epsilon$.
Denote by $\widehat{G}_1(s)$ and, respectively, by
$\widehat{G}_2(s)$ ($s\geq0$) their Laplace-Stieltjes transforms.
Let $\gamma_{G_1}$ and $\gamma_{G_2}$
 be the corresponding
solutions of the functional equations $z=\widehat{G}_1(\mu-\mu z)$
and $z=\widehat{G}_2(\mu-\mu z)$ both belonging to the interval
(0,1). (Recall that according to the well-known theorem of
Tak\'acs \cite{Takacs 1955}, under the assumption
$\mu\frak{g}_1>1$ the least positive roots $\gamma_{G_1}$ and $\gamma_{G_2}$ of the
equations $z=\widehat{G}_1(\mu-\mu z)$ and
$z=\widehat{G}_2(\mu-\mu z)$ are unique in the interval (0,1).)  An
upper bound for $|\gamma_{G_1}-\gamma_{G_2}|$ is obtained in Section
\ref{sec2}. In Sections 3 and 4, applications of the results of Section \ref{sec2} are given for different loss queueing systems. Specifically, in Section \ref{sec3.1} lower and upper asymptotic bounds are established for loss probabilities in the $GI/M/1/n$ queueing system as $n$ increases to infinity; in Section \ref{sec3.1A}, bounds for the loss probability in $M/GI/1$ buffer model with two types of losses, which has been studied in \cite{Abramov 2004}, are obtained, and
in
Section \ref{sec3.2}, bounds for the loss probabilities in the
buffers model with priorities, which has been studied in \cite{Abramov 2008b}, are established. In Section
\ref{sec4}, the continuity analysis of the loss probability in the
$M/M/1/n$ queueing system is provided. The continuity analysis of Section \ref{sec4} is
based on the bounds obtained in Section \ref{sec2}, the results
for the loss probabilities obtained in Section \ref{sec3.1} and
characterization properties of the exponential distribution.

\section{Properties of probability distribution functions of the class
$\mathcal{G}$}\label{sec2} In this section we establish an
inequality for $|\gamma_{G_1}-\gamma_{G_2}|$ for probability distribution functions $G_1(x)$ and $G_2(x)$ belonging to the class $\mathcal{G}(\frak{g}_1, \frak{g}_2)$ and satisfying the condition
\begin{equation}\label{AC}
\sup_{x>0}|G_1(x)-G_2(x)|<\epsilon.
\end{equation}

 We start from the known
inequalities for probability distribution functions of the class
$\mathcal{G}(\frak{g}_1, \frak{g}_2)$. Vasilyev and Kozlov
\cite{Vasilyev and Kozlov 1969} proved,
\begin{equation}\label{2.1}
\inf_{G\in\mathcal{G}(\frak{g}_1,
\frak{g}_2)}\int_0^\infty\mathrm{e}^{-sx}\mathrm{d}G(x)=\mathrm{e}^{-s\frak{g}_1},
\ s\geq0
\end{equation}
and
\begin{equation}\label{2.2}
\max_{G\in\mathcal{G}(\frak{g}_1,
\frak{g}_2)}\int_0^\infty\mathrm{e}^{-sx}\mathrm{d}G(x)=1-\frac{\frak{g}_1^2}{\frak{g}_2}+
\frac{\frak{g}_1^2}{\frak{g}_2}\exp\left(-\frac{\frak{g}_2}{\frak{g_1}}s\right),
\ s\geq0,
\end{equation}
where the maximum is obtained for
\begin{equation}\label{2.3}
G(x)=G_{\mathrm{max}}(x)=\begin{cases}0, &\text{if} \ t<0;\\
1-\frac{\frak{g}_1^2}{\frak{g}_2}, &\text{if} \ 0\leq
t<\frac{\frak{g}_2}{\frak{g}_1};\\
1, &\text{if} \ t\geq\frac{\frak{g}_2}{\frak{g}_1}.
\end{cases}
\end{equation}

The lower and upper bounds given by \eqref{2.1} and \eqref{2.2}
are tight. If $\frak{g}_2=\frak{g}_1^2$, then these bounds
coincide.

It is pointed out in Rolski \cite{Rolski 1972} that \eqref{2.1} and
\eqref{2.2} could be obtained immediately by the method of
reduction to the Tchebycheff system \cite{Karlin and Studden 1966}
if one takes into account that $\{1, t, t^2\}$ and $\{1, t, t^2,
\mathrm{e}^{-st}\}$ form Tchebycheff systems on $[0, \infty)$.
Rolski \cite{Rolski 1972} has established as follows. For
$\gamma_G$, the least positive root of the functional
equation $z=\widehat{G}(\mu-\mu z)$, it was shown
\begin{equation}\label{2.4}
\inf_{G\in\mathcal{G}(\frak{g}_1, \frak{g}_2)}\gamma_G=\ell,
\end{equation}
and
\begin{equation}\label{2.5}
\max_{G\in\mathcal{G}(\frak{g}_1,
\frak{g}_2)}\gamma_G=\gamma_{G_{\mathrm{max}}}=1+\frac{\frak{g}_1^2}{\frak{g}_2}(\ell-1),
\end{equation}
where $\ell$ in \eqref{2.4} and \eqref{2.5} is the least root of
the equation:
\begin{equation}\label{LR}
z=\mathrm{e}^{-\mu\frak{g}_1+\mu\frak{g}_1z}.
\end{equation}

The proof of \eqref{2.4} and \eqref{2.5} given in \cite{Rolski
1972} is based on the convexity of the function
$\widehat{G}(\mu-\mu z)-z$.

From \eqref{2.1} and \eqref{2.2} we also have as
follows. Let $G_1(x)$ and $G_2(x)$ be arbitrary probability
distribution functions of the class $\mathcal{G}(\frak{g}_1,
\frak{g}_2)$, and let $\widehat{G}_1(s)$ and, correspondingly,
$\widehat{G}_2(s)$ be their Laplace-Stieltjes transforms
($s\geq0$). Then,
\begin{equation}\label{2.6}
\sup_{G_1,G_2\in\mathcal{G}(\frak{g}_1,
\frak{g}_2)}\sup_{s\geq0}\big|\widehat{G}_1(s)-\widehat{G}_2(s)\big|=1-\frac{\frak{g}_1^2}{\frak{g}_2}.
\end{equation}
Indeed, for the derivative of the difference between the
right-hand side of \eqref{2.2} and that of \eqref{2.1} we have
\begin{equation}\label{2.7}
\begin{aligned}
&\frac{\mathrm{d}}{\mathrm{d}s}\left[1-\frac{\frak{g}_1^2}{\frak{g}_2}+
\frac{\frak{g}_1^2}{\frak{g}_2}\exp\left(-\frac{\frak{g}_2}{\frak{g_1}}s\right)-\mathrm{e}^{-s\frak{g}_1}\right]\\
&=\frak{g}_1\left(\exp(-\frak{g}_1s)-\exp\Big(-\frac{\frak{g}_2}{\frak{g}_1}s\Big)\right).
\end{aligned}
\end{equation}
This derivative is equal to zero for $s=0$ (minimum) and
$s=+\infty$ (maximum). (The trivial case
$\frak{g}_2=\frak{g}_1^2$, leading to the identity to zero of the
right-hand side of \eqref{2.7} for all $s\geq0$, is not
considered.)

Therefore, from \eqref{2.7} as well as from \eqref{2.1} and
\eqref{2.2} we arrive at \eqref{2.6}.

In turn, from \eqref{2.4} and \eqref{2.5} we have the following
inequality for $|\gamma_{G_1}-\gamma_{G_2}|$:
\begin{equation}\label{2.8}
|\gamma_{G_1}-\gamma_{G_2}|\leq1+\frac{\frak{g}_1^2}{\frak{g}_2}(\ell-1)-\ell.
\end{equation}

The inequality
\eqref{2.8} follows from the results of Rolski \cite{Rolski 1972}. Under additional condition \eqref{AC} we will establish
an improved inequality for $|\gamma_{G_1}-\gamma_{G_2}|$.

Prior studying the properties of the class of probability distribution functions $\mathcal{G}(\frak{g}_1, \frak{g}_2)$ under additional condition \eqref{AC}, note that
inequalities \eqref{2.1}, \eqref{2.2},
\eqref{2.4}, and \eqref{2.5} hold
true for a wider class of probability distribution functions
than $\mathcal{G}(\frak{g}_1, \frak{g}_2)$. We will prove that the above inequalities remain correct for the class of probability distribution functions $\bigcup_{(m_1,
m_2)\in\mathcal{M}(\frak{g}_1, \frak{g}_2)} \mathcal{G}(m_1, m_2)$, where the set of pairs $\{(m_1,m_2)\}$ contains the pair $(\frak{g}_1, \frak{g}_2)$ (this set of pairs denoted by $\mathcal{M}(\frak{g}_1, \frak{g}_2)$ will be defined below).

\smallskip

Let
$\frak{m}>\frac{1}{\mu}$ be such the boundary value, that the
least root of the equation
$$z=\mathrm{e}^{-\mu\frak{m}+\mu\frak{m}z}$$ is equal to the
right-hand side of \eqref{2.5}, and let $m_1$ and $m_2$ are the values
satisfying the inequalities $\frak{m}\leq m_1\leq\frak{g}_1$, and
$\frac{m_1^2}{m_2}\geq\frac{\frak{g}_1^2}{\frak{g}_2}$ ($m_1^2\leq
m_2$). Then, we have the same bounds \eqref{2.1} and \eqref{2.2} for the
probability distribution functions and \eqref{2.4} and \eqref{2.5}
for the roots $\gamma_G$ but now for the wider class of
probability distribution functions belonging to
$\mathcal{G}(\frak{g}_1, \frak{g}_2)\cup \mathcal{G}(m_1, m_2)$.

Indeed, for any $m_1$ satisfying the inequality $\frak{m}\leq
m_1\leq\frak{g}_1$, and any $m_2$ for which
$\frac{m_1^2}{m_2}\geq\frac{\frak{g}_1^2}{\frak{g}_2}$, according
to \cite{Vasilyev and Kozlov 1969} we have
\begin{equation}\label{2.1'}
\inf_{\mathcal{G}(m_1,
m_2)}\int_0^\infty\mathrm{e}^{-sx}\mathrm{d}G(x)=\mathrm{e}^{-sm_1}\geq
\mathrm{e}^{-s\frak{g}_1}, \ s\geq0,
\end{equation}
and, taking into account that
$\frac{m_1^2}{m_2}\geq\frac{\frak{g}_1^2}{\frak{g}_2}$ and
$m_1\leq\frak{g}_1$ together lead to
$\frac{m_1}{m_2}\geq\frac{\frak{g}_1}{\frak{g}_2}$, we also have
\begin{equation}\label{2.2'}
\begin{aligned}
\max_{G\in\mathcal{G}(m_1,
m_2)}\int_0^\infty\mathrm{e}^{-sx}\mathrm{d}G(x)&=1-\frac{m_1^2}{m_2}+
\frac{m_1^2}{m_2}\exp\left(-\frac{m_2}{m_1}s\right)\\
&\leq1-\frac{\frak{g}_1^2}{\frak{g}_2}+
\frac{\frak{g}_1^2}{\frak{g}_2}\exp\left(-\frac{\frak{g}_2}{\frak{g_1}}s\right),
\ s\geq0,
\end{aligned}
\end{equation}
where the equality in the right-hand side of the first line of \eqref{2.2'} is a replacement of the initial probability distribution function (given by \eqref{2.3}) by
another one, where the parameters $\frak{g}_1$ and $\frak{g}_2$ are correspondingly replaced with $m_1$ and $m_2$.

According to the result of Rolski \cite{Rolski 1972}, we respectively have:
\begin{equation}\label{2.4'}
\inf_{G\in\mathcal{G}(m_1, m_2)}\gamma_G=\ell^*\geq\ell,
\end{equation}
and
\begin{equation}\label{2.5'}
\max_{G\in\mathcal{G}(m_1,
m_2)}\gamma_G=1+\frac{m_1^2}{m_2}(\ell^*-1)\leq
1+\frac{\frak{g}_1^2}{\frak{g}_2}(\ell-1).
\end{equation}

From the above inequalities of \eqref{2.1'} - \eqref{2.5'}, one can
conclude as follows. Let
$$\mathcal{M}(\frak{g}_1, \frak{g}_2) =
\left\{(m_1, m_2): \frak{m}\leq m_1\leq\frak{g}_1;\quad
\frac{m_1^2}{m_2}\geq\frac{\frak{g}_1^2}{\frak{g}_2};\quad
m_1^2\leq m_2\right\}.
$$
(Recall that $\frak{m}>\frac{1}{\mu}$ is such the boundary value
that the least root of the equation
$z=\mathrm{e}^{-\mu\frak{m}+\mu\frak{m}z}$ is equal to the
right-hand side of \eqref{2.5}.) Denote
$$
\mathcal{G}(\mathcal{M})=\bigcup_{(m_1,
m_2)\in\mathcal{M}(\frak{g}_1, \frak{g}_2)} \mathcal{G}(m_1, m_2).
$$

 Then we have the following elementary generalization of
\eqref{2.4} and \eqref{2.5}:
\begin{equation}\label{2.4''}
\inf_{G\in\mathcal{G}(\frak{g}_1, \frak{g}_2)}\gamma_G=
\inf_{G\in\mathcal{G}(\mathcal{M})}\gamma_G=\ell,
\end{equation}
and
\begin{equation}\label{2.5''}
\max_{G\in\mathcal{G}(\frak{g}_1,
\frak{g}_2)}\gamma_G=\max_{G\in\mathcal{G}(\mathcal{M})}
\gamma_G=\gamma_{G_{\mathrm{max}}}=1+\frac{\frak{g}_1^2}{\frak{g}_2}(\ell-1).
\end{equation}
Notice that if $m_1=\frak{m}$, then we have
$\ell^*=1+\frac{\frak{g}_1^2}{\frak{g}_2}(\ell-1)$, where $\ell^*$
is defined by \eqref{2.4'}. On the other hand, according to
\eqref{2.5'} we obtain $\frac{m_1^2}{m_2}=1$, i.e. in this case
$m_2=\frak{m}^2$. Thus the set $\mathcal{M}(\frak{g}_1,
\frak{g}_2)$ and, consequently, the class
$\mathcal{G}(\mathcal{M})$ are defined correctly.

We start now to work with \eqref{AC}. We have the following
elementary property:
\begin{equation}\label{2.9}
\begin{aligned}
\sup_{s\geq0}\big|\widehat{G}_1(s)-\widehat{G}_2(s)\big|&=\sup_{s>0}\left|\int_0^\infty\mathrm{e}^{-sx}
\mathrm{d}G_1(x)-\int_0^\infty\mathrm{e}^{-sx}
\mathrm{d}G_2(x)\right|\\
&\leq\sup_{s>0}\int_0^\infty
s\mathrm{e}^{-sx}\underbrace{\sup_{y\geq0}\big|G_1(y)-G_2(y)\big|}_{\leq\epsilon\text{ according to }\eqref{AC}}\mathrm{d}x\\
&\leq\epsilon.
\end{aligned}
\end{equation}
Thus under the assumption of \eqref{AC}, the difference in
absolute value between the Laplace-Stieltjes transforms
$\widehat{G}_1(s)$ and $\widehat{G}_2(s)$ is not greater than
$\epsilon$.

It follows from \eqref{2.9} that
\begin{equation}\label{2.9'}
\sup_{\substack{G_1,G_2\in\mathcal{G}(\frak{g}_1, \frak{g}_2)\\
\mathcal{K}(G_1,G_2)\leq\epsilon}}\sup_{s\geq0}\big|\widehat{G}_1(s)-\widehat{G}_2(s)\big|=\epsilon_1\leq\epsilon.
\end{equation}
(We do not know whether or not the value $\epsilon_1$ can be found.
However, the exact value of $\epsilon_1$ is not important for our
further considerations. Relation \eqref{2.9'} will be used later
in this section.)

On the other hand, according to \eqref{2.6} for two arbitrary
probability distribution functions of the class
$\mathcal{G}(\mathcal{M})$ the difference in absolute value
between their Laplace-Stieltjes transforms is not greater than
$1-\frac{\frak{g}_1^2}{\frak{g}_2}$. Therefore, if
$\epsilon\geq1-\frac{\frak{g}_1^2}{\frak{g}_2}$, then the condition
\eqref{1.2} is not meaningful. Therefore, it will be assumed in
the further consideration that
$\epsilon<1-\frac{\frak{g}_1^2}{\frak{g}_2}$.

\smallskip

The lemma below is the statement on the dense of the class
$\mathcal{G}(\frak{g}_1, \frak{g}_2)$.

\begin{lem}\label{lem1}
For any probability distribution function
$G(x)\in\mathcal{G}(\frak{g}_1, \frak{g}_2)$ \
($\frak{g}_1^2\neq\frak{g}_2$) there exists another probability
distribution function $\widetilde{G}(x)\in\mathcal{G}(\frak{g}_1,
\frak{g}_2)$, which distinguishes from $G(x)$ at least in one point, such that for any $\delta>0$,
\begin{equation*}
\mathcal{K}\big(\widetilde{G}, G\big)<\delta.
\end{equation*}
\end{lem}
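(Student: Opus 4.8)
The plan is to avoid constructing a bespoke perturbation of $G$ and instead to exploit the convexity of the class $\mathcal{G}(\frak{g}_1,\frak{g}_2)$ in the space of distribution functions. The key observation I would use is that if $H$ is \emph{any} element of $\mathcal{G}(\frak{g}_1,\frak{g}_2)$ different from $G$, then for each $\eta\in(0,1)$ the mixture $\widetilde G_\eta:=(1-\eta)G+\eta H$ again lies in $\mathcal{G}(\frak{g}_1,\frak{g}_2)$: it is a convex combination of distribution functions of positive random variables, hence itself a distribution function of a positive random variable, and by linearity of the integral its first two moments are $(1-\eta)\frak{g}_1+\eta\frak{g}_1=\frak{g}_1$ and $(1-\eta)\frak{g}_2+\eta\frak{g}_2=\frak{g}_2$. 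Since $\widetilde G_\eta(x)-G(x)=\eta\bigl(H(x)-G(x)\bigr)$ and $|H(x)-G(x)|\le 1$, one gets $\mathcal{K}(\widetilde G_\eta,G)=\eta\sup_{x>0}|H(x)-G(x)|\le\eta$. Thus, given $\delta>0$, choosing $\eta<\delta$ produces $\widetilde G:=\widetilde G_\eta\in\mathcal{G}(\frak{g}_1,\frak{g}_2)$ with $\mathcal{K}(\widetilde G,G)<\delta$, and $\widetilde G\neq G$ because $\eta(H-G)$ is nonzero wherever $H$ and $G$ differ.

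Everything therefore reduces to the single substantive point: the existence of at least one $H\in\mathcal{G}(\frak{g}_1,\frak{g}_2)$ with $H\neq G$, and this is exactly where the hypothesis $\frak{g}_1^2\neq\frak{g}_2$ (equivalently, positive variance $\sigma^2:=\frak{g}_2-\frak{g}_1^2>0$) enters. First I would record the standard one-parameter family of two-point distributions matching the prescribed moments: for each $a\in(0,\frak{g}_1)$ set $b(a):=\frak{g}_1+\sigma^2/(\frak{g}_1-a)$ and assign mass $p(a)=(b(a)-\frak{g}_1)/(b(a)-a)$ to $a$ and mass $1-p(a)$ to $b(a)$. A direct check shows each such distribution has support in $(0,\infty)$, mean $\frak{g}_1$, and variance $(\frak{g}_1-a)(b(a)-\frak{g}_1)=\sigma^2$, hence second moment $\frak{g}_2$; so all of them belong to $\mathcal{G}(\frak{g}_1,\frak{g}_2)$. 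Distinct values of $a$ give distributions with distinct support, hence distinct distribution functions, so the family is infinite; since $G$ is a single distribution function, at least one member $H$ of the family satisfies $H\neq G$, as required.

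The only place requiring care — the ``main obstacle,'' such as it is — is the reading of the statement together with the verification that $\widetilde G$ genuinely differs from $G$. Taken literally, a single $\widetilde G$ cannot satisfy $\mathcal{K}(\widetilde G,G)<\delta$ for \emph{every} $\delta>0$ while still differing from $G$ somewhere; the intended assertion is the non-isolation of $G$, namely that for each $\delta>0$ such a $\widetilde G$ (depending on $\delta$) exists. The construction above delivers exactly this, with the whole segment $\{\widetilde G_\eta:0<\eta<1\}$ furnishing elements of $\mathcal{G}(\frak{g}_1,\frak{g}_2)$ arbitrarily close to $G$ in Kolmogorov's metric. I would close by emphasizing that the argument uses nothing about the fine structure of $G$ (its atoms, absolutely continuous part, or number of support points are all irrelevant), which is precisely why the convex-combination device is preferable to an explicit local perturbation of $\mathrm{d}G$, whose admissibility would otherwise force a case analysis according to where $G$ places its mass.
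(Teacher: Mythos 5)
Your proof is correct and follows essentially the same route as the paper's: both perturb $G$ by mixing it with another member of $\mathcal{G}(\frak{g}_1,\frak{g}_2)$, use linearity of the first two moments to stay inside the class, and bound the Kolmogorov distance of the mixture $(1-\eta)G+\eta H$ from $G$ by a quantity proportional to the mixing weight. The only differences are in your favor: you explicitly construct a second member $H$ of the class (via the two-point family with prescribed mean and variance), where the paper simply asserts that nontriviality $\frak{g}_1^2\neq\frak{g}_2$ allows one to take two distinct distributions, and your bound $\mathcal{K}(\widetilde G_\eta,G)=\eta\sup_{x>0}|H(x)-G(x)|\leq\eta$ is slightly sharper than the paper's estimate $pF(x)+pG(x)<\delta$.
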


\begin{proof} Under the assumption that the class $\mathcal{G}(\frak{g}_1, \frak{g}_2)$ is not trivial, i.e. $\frak{g}_2\neq\frak{g}_1^2$, one can take two distinct probability distribution functions $G(x)$ and $F(x)$ of this class. For any $p\in(0,1)$, let $G_p(x)=pF(x)+(1-p)G(x)$. Apparently, $G_p(x)\in\mathcal{G}(\frak{g}_1, \frak{g}_2)$ as well. Therefore, choosing $p<\frac{\delta}{2}$, by the triangle inequality we obtain:
$$
|G_p(x)-G(x)|=|pF(x)+(1-p)G(x)-G(x)|\leq pF(x)+pG(x)<\frac{\delta}{2}+\frac{\delta}{2}=\delta.
$$
Hence, for $p<\frac{\delta}{2}$ one can set $\widetilde{G}(x)=G_p(x)$.
\end{proof}

An extended version of Lemma \ref{lem1} is given in the following lemma.

\begin{lem}\label{cor1}Let $(m_1, m_2)\in\mathcal{M}$ and $(m_1^\prime, m_2^\prime)\in\mathcal{M}$ ($m_2\neq m_1^2$, $m_2^\prime\neq (m_1^\prime)^2$), and let $G(x)\in\mathcal{G}(m_1, m_2)$. Then for any $\delta>0$ there exists a probability distribution function $\widetilde{G}(x)\in\mathcal{G}(m_1^\prime, m_2^\prime)$ such that $\sup_{x>0}|G(x)-\widetilde{G}(x)|<\delta$.
\end{lem}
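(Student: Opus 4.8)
The plan is to imitate the mixing idea of Lemma \ref{lem1}, but now exploiting that Kolmogorov's metric is insensitive to a small amount of probability mass placed far out: if one relocates only a fraction $\rho<\delta$ of the mass of $G$, then the two distribution functions differ pointwise by at most $\rho$ (any modification carried out on an event of probability $\rho$ changes the distribution function by at most that probability), yet such a relocation can alter the first two moments by any prescribed \emph{bounded} amounts. Since both $(m_1,m_2)$ and $(m_1^\prime,m_2^\prime)$ lie in $\mathcal{M}$, the required corrections $\Delta_1:=m_1^\prime-m_1$ and $\Delta_2:=m_2^\prime-m_2$ are bounded, so a perturbation of fixed height $\rho<\delta$ should suffice.

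Concretely, I would fix $\rho\in(0,\delta)$, remove from $G$ a piece of total mass $\rho$ carried by a chosen region $S$, and reinsert that mass as a two-point distribution with weights $\rho_1,\rho_2$ ($\rho_1+\rho_2=\rho$) at two free locations $v_1,v_2>0$. Writing $a_1=\int_S x\,\mathrm{d}G$ and $a_2=\int_S x^2\,\mathrm{d}G$ for the moments of the removed piece, the requirement $\widetilde{G}\in\mathcal{G}(m_1^\prime,m_2^\prime)$ reduces to the two bookkeeping equations $\rho_1v_1+\rho_2v_2=\Delta_1+a_1$ and $\rho_1v_1^2+\rho_2v_2^2=\Delta_2+a_2$, while the construction guarantees $\mathcal{K}(\widetilde{G},G)\le\rho<\delta$ automatically. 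The first step is therefore to record these moment identities for $\widetilde{G}$; the second is to solve the two equations for $(v_1,v_2,\rho_1,\rho_2)$, which is exactly the classical two-point (Gauss) quadrature problem and admits nonnegative weights and positive nodes precisely when $\Delta_1+a_1\ge 0$ and $\rho\,(\Delta_2+a_2)\ge(\Delta_1+a_1)^2$.

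This last inequality is where I expect the real work to lie, and it is the main obstacle: for a small weight $\rho$ a Cauchy--Schwarz tension forces the relocated nodes far out and thereby over-contributes to the second moment, so the plain mixture $\widetilde{G}=(1-\rho)G+\rho H$ (which amounts to removing mass proportionally, $a_i=\rho\,m_i$) does not in general close the gap. The remedy I would pursue is to choose $S$ adaptively: when the second moment must be increased one removes mass near the origin (making $a_1,a_2$ small, so that the right-hand side is easily dominated by placing a node far out), whereas when it must be decreased one removes mass from the upper tail of $G$ (making $a_2$ large, which is exactly where the nondegeneracy $m_2\neq m_1^2$ and the two-sided freedom in the choice of $S$ enter). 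Completing the argument then needs only a short case analysis on the signs of $\Delta_1$ and $\Delta_2$, together with the observation---already furnished by Lemma \ref{lem1}---that $G$ may first be replaced by a uniformly close member of $\mathcal{G}(m_1,m_2)$ carrying mass in whatever region the estimate requires, after which the feasibility inequality is verified and the nodes and weights are produced explicitly.
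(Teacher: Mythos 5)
Your relocation framework is set up correctly: moving a piece of mass $\rho<\delta$ changes the distribution function by at most $\rho$ in Kolmogorov's metric, the two bookkeeping identities are right, and so are the feasibility conditions $\Delta_1+a_1\ge 0$ and $\rho(\Delta_2+a_2)\ge(\Delta_1+a_1)^2$. The gap is that the ``short case analysis'' you postpone cannot be carried out, and your own feasibility inequality is what blocks it. The lemma permits $m_1'<m_1$ (the first coordinates of pairs in $\mathcal{M}$ range over the whole interval $[\frak{m},\frak{g}_1]$), and then feasibility demands $a_1\ge m_1-m_1'$, a fixed positive quantity; but Cauchy--Schwarz gives $a_1=\int_S x\,\mathrm{d}G\le\bigl(G(S)\int_S x^2\,\mathrm{d}G\bigr)^{1/2}\le\sqrt{\rho m_2}$, so you would need $\rho\ge(m_1-m_1')^2/m_2$, incompatible with $\rho<\delta$ once $\delta$ is small. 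Neither of your remedies escapes this: removing the upper tail of a fixed $G$ gives $a_1=\int_T^\infty x\,\mathrm{d}G\to0$ as $G((T,\infty))=\rho\to0$ (because $m_1<\infty$), so the tail carries \emph{vanishing}, not large, moment mass; and a preliminary replacement of $G$ via Lemma \ref{lem1} cannot help, because every member of $\mathcal{G}(m_1,m_2)$ obeys the same bound $\int_S x\,\mathrm{d}G\le\sqrt{\rho m_2}$ on sets of mass $\rho$. The cases $m_2'<m_2$ (which need $a_2\ge m_2-m_2'$, while $\sup_S a_2$ is the tail second moment, which tends to $0$) and even $m_1'>m_1$ (your inequality read as $M_2\ge M_1^2/\rho\ge(m_1'-m_1)^2/\delta$ forces the reinserted piece to overshoot any fixed $m_2'$) fail for the same reason.

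You should also know that this is not a repairable defect of your method but of the statement itself, so no case analysis can close it. For any $\widetilde G$ with $\sup_{x>0}|G(x)-\widetilde G(x)|<\delta$ one has $m_1-m_1'=\int_0^\infty\bigl(\widetilde G(x)-G(x)\bigr)\,\mathrm{d}x\le\int_0^\infty\min\{\delta,1-G(x)\}\,\mathrm{d}x$, and the right-hand side tends to $0$ as $\delta\to0$ for fixed $G$; hence a fixed decrease of the mean is impossible, the analogous bound with weight $2x$ rules out a fixed decrease of the second moment, and the $M_2\ge M_1^2/\rho$ constraint rules out a fixed increase of the mean. The conclusion can therefore hold for every $\delta>0$ only when $m_1'=m_1$ and $m_2'\ge m_2$ --- and in exactly that case the ``plain mixture'' you discarded is the paper's own proof and does work: the paper sets $\widetilde G=(1-\delta/2)G+(\delta/2)F$ with the moments of $F$ prescribed by \eqref{COR2}--\eqref{COR3}, which for $m_1'=m_1$, $m_2'\ge m_2$ define a legitimate distribution on $(0,\infty)$ (mean $m_1$, second moment $m_2+2(m_2'-m_2)/\delta\ge m_1^2$), and $\mathcal{K}(\widetilde G,G)\le\delta/2$ is immediate. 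In the remaining cases the paper's argument founders exactly where yours does: the prescribed mean $(2m_1'-(2-\delta)m_1)/\delta$ of $F$ is negative when $m_1'<(1-\delta/2)m_1$, and in its second case ($m_2'<m_2$) the ``de-mixed'' $\widetilde G$ is never shown to be nondecreasing. So your diagnosis of where the work lies was accurate, but the promised completion does not exist; the claim (and either construction) is sound only under the restriction $m_1'=m_1$, $m_2'\ge m_2$.
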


\begin{proof} Assume that $m_2^\prime\geq m_2$. Then take a probability distribution function $F(x)$ satisfying the properties:
\begin{equation}\label{COR2}
\int_{-\infty}^\infty x\mathrm{d}F(x)=\frac{2m_1^\prime-(2-\delta)m_1}{\delta},
\end{equation}
and
\begin{equation}\label{COR3}
\int_{-\infty}^\infty x^{2}\mathrm{d}F(x)=\frac{2m_2^\prime-(2-\delta)m_2}{\delta}.
\end{equation}
Then, the probability distribution function
\begin{equation}\label{COR1}
\widetilde{G}(x)=\left(1-\frac{\delta}{2}\right)G(x)+\frac{\delta}{2}F(x)
\end{equation}
belongs to the class $\mathcal{G}(m_1^\prime, m_2^\prime)$, and, according to the triangle inequality
$$
|\widetilde{G}(x)-G(x)|\leq\frac{\delta}{2}G(x)+\frac{\delta}{2}F(x)<\delta.
$$
In the opposite case where $m_2^\prime< m_2$ take the probability distribution function $F(x)$ satisfying the properties
\begin{equation*}\label{COR4}
\int_{-\infty}^\infty x\mathrm{d}F(x)=\frac{2m_1-(2-\delta)m_1^\prime}{\delta},
\end{equation*}
and
\begin{equation*}\label{COR5}
\int_{-\infty}^\infty x^{2}\mathrm{d}F(x)=\frac{2m_2-(2-\delta)m_2^\prime}{\delta}.
\end{equation*}
Then, instead of \eqref{COR1} we set
$$
\widetilde{G}(x)=\frac{2G(x)}{2-\delta}-\frac{\delta}{2}F(x),
$$
or
$$
G(x)=\left(1-\frac{\delta}{2}\right)\widetilde{G}(x)+\frac{\delta}{2}F(x).
$$
Apparently, $\widetilde{G}(x)\in\mathcal{G}(m_1^\prime,
m_2^\prime)$, and, assuming that $0<F(x)<1$ for all $x>0$ according
to the triangle inequality we obtain:
$$
|\widetilde{G}(x)-G(x)|\leq\frac{\delta}{2}\widetilde{G}(x)+\frac{\delta}{2}F(x)<\delta.
$$
The lemma is proved.
\end{proof}

With the aid of Lemmas \ref{lem1} and \ref{cor1} we will solve the following problem. Let $G_1(x)$ and $G_2(x)$ be two probability distributions belonging to the class $\mathcal{G}(\mathcal{M})$. Under the assumption that $\sup_{x>0}|G_1(x)-G_2(x)|<\epsilon$ we will find an estimate for the supremum of $|\gamma_{G_1}-\gamma_{G_2}|$ (the supremum between the corresponding least positive roots of the functional equations $z=\widehat{G}_1(\mu-\mu z)$ and $z=\widehat{G}_2(\mu-\mu z)$.) Solution of this problem, in particular, addresses the case when the probability distribution functions $G_1(x)$ and $G_2(x)$ belong to the class $\mathcal{G}(\frak{g}_1, \frak{g}_2)$. In our analysis below the estimate of \eqref{2.9'} is used.

The analysis uses Lemma \ref{lem1}. The
Laplace-Stieltjes transform
$\widehat{G}_1(s)=\mathrm{e}^{-\frak{g}_1s}$ contains only the parameter $\frak{g}_1$ and does not contain the second one $\frak{g}_2$. Hence similarly to \eqref{2.1} one can write
\begin{equation}\label{2.1''}
\inf_{G\in\mathcal{G}(\frak{g}_1,
m_2)}\int_0^\infty\mathrm{e}^{-sx}\mathrm{d}G(x)=\inf_{G\in\mathcal{G}(\mathcal{M})}
\int_0^\infty\mathrm{e}^{-sx}\mathrm{d}G(x)=\mathrm{e}^{-s\frak{g}_1},
\ s\geq0,
\end{equation}
where $m_2$ is a fictive parameter, which is assumed to be unknown.
Let us find this
unknown parameter $m_2$ in the Laplace-Stieltjes transform
$\widehat{G}_2(s)=1-\frac{\frak{g}_1^2}{m_2}+
\frac{\frak{g}_1^2}{m_2}\exp\left(-\frac{m_2}{\frak{g}_1}s\right)$
taking into account that (cf. \eqref{2.9'})
\begin{equation}\label{2.9''}
\sup_{\substack{G_2\in\mathcal{G}(\frak{g}_1, m_2)\\
\mathcal{K}(G_1,G_2)\leq\epsilon}}\sup_{s>0}\big|\widehat{G}_1(s)-\widehat{G}_2(s)\big|=\epsilon_1.
\end{equation}
Relation \eqref{2.9''} holds true, because
$G_1(x)\in\mathcal{G}(\mathcal{M})$, and according to Lemma
\ref{cor1} for any $\epsilon>0$ there exists a probability
distribution function
$\widetilde{G}_1(x)\in\mathcal{G}(\mathcal{M})$ such that
$|G_1(x)-\widetilde{G}_1(x)|<\epsilon$. On the other hand, the
class of probability distribution functions
$\mathcal{G}(\frak{g}_1, m_2)$ is dense, so $\widetilde{G}_1(x)$
can be chosen  belonging to the same class $\mathcal{G}(\frak{g}_1, m_2)$ as the probability distribution function $G_1(x)$.

The real distance between the Laplace-Stieltjes transforms
$\widehat{G}_1(s)$ and $\widehat{G}_2(s)$
($G_1\in\mathcal{G}(\frak{g}_1,m_2),
G_2\in\mathcal{G}(\frak{g}_1,m_2))$ is
\begin{equation}\label{2.6'''}
\sup_{G_1,G_2\in\mathcal{G}(\frak{g}_1,
m_2)}\sup_{s\geq0}\big|\widehat{G}_1(s)-\widehat{G}_2(s)\big|=1-\frac{\frak{g}_1^2}{m_2}.
\end{equation}
(cf. relation \eqref{2.6}). Therefore, equating the right hand side of \eqref{2.6'''} to $\epsilon_1$ we have
$$
1-\frac{\frak{g}_1^2}{m_2}=\epsilon_1,
$$
and hence
\begin{equation}\label{2.24}
m_2=\frac{\frak{g}_1^2}{1-\epsilon_1}.
\end{equation}
The meaning of the parameter $m_2$ given by \eqref{2.24} is as
follows. If the distance between two Laplace-Stieltjes transforms is
$\epsilon_1$ in the sense of relation \eqref{2.6'''}, then it
remains the same for all distributions $G_1(x)$ and $G_2(x)$
belonging to the family $\mathcal{G}(\frak{g}_1,g_2)$, $m_2\leq
g_2\leq\frak{g}_2$, where the class $\mathcal{G}(\frak{g}_1,m_2)$ is
a marginal class of this family. In this case \eqref{2.9''} can be
simplified as
\begin{equation}\label{2.9'''}
\sup_{\substack{G_2\in\mathcal{G}(\frak{g}_1, m_2)\\
\mathcal{K}(G_1,G_2)\leq\epsilon}}\sup_{s>0}\big|\widehat{G}_1(s)-\widehat{G}_2(s)\big|
=\sup_{G_2\in\mathcal{G}(\frak{g}_1, m_2)}\sup_{s>0}\big|\widehat{G}_1(s)-\widehat{G}_2(s)\big|=\epsilon_1.
\end{equation}

Hence, in this case we have the bounds coinciding with the class of
all distributions of positive random variables having the moments
$m_1=\frak{g}_1$ and $m_2=\frac{\frak{g}_1^2}{1-\epsilon_1}$, i.e.
with the class
$\mathcal{G}\left(\frak{g}_1,\frac{\frak{g}_1^2}{1-\epsilon_1}\right)$.
We also have as follows:
\begin{equation}
\label{2.25}
\begin{aligned}
\sup_{G_1,G_2\in\mathcal{G}\left(\frak{g}_1,\frac{\frak{g}_1^2}{1-\epsilon_1}\right)}
\big|\gamma_{G_1}-\gamma_{G_2}\big|&=1+\frac{\frak{g}_1^2}
{\frac{\frak{g}_1^2}{1-\epsilon_1}}(\ell-1)-\ell\\
&=1+(1-\epsilon_1)(\ell-1)-\ell\\
&=\epsilon_1-\epsilon_1\ell\\
&\leq \epsilon-\epsilon\ell.
\end{aligned}
\end{equation}

Let us consider another case, where
$m_1=\frak{g}_1-\delta\geq\frak{m}$, $\delta>0$. Let
$\widehat{G}_1(s)=\mathrm{e}^{-(\frak{g}_1-\delta)s}$, and let
$\widehat{G}_2(s)=1-\frac{(\frak{g}_1-\delta)^2}{m_2}+
\frac{(\frak{g}_1-\delta)^2}{m_2}\exp\left(-\frac{m_2}{\frak{g}_1-\delta}s\right)$
with an unknown parameter $m_2$. In this case,
$$\sup_{\substack{G_2\in\mathcal{G}(\frak{g}_1-\delta, m_2)\\
\mathcal{K}(G_1,G_2)\leq\epsilon}}\sup_{s>0}|\widehat{G}_1(s)-\widehat{G}_2(s)|$$
cannot be greater than $\epsilon$ (see relations \eqref{2.9} and
\eqref{2.9'}).

 For example, taking
$m_1=\frak{m}$ we arrive at
$\widehat{G}_1(s)\equiv\widehat{G}_2(s),$ and therefore
$$\sup_{\substack{G_2\in\mathcal{G}(\frak{m}, \frak{m}^2)\\
\mathcal{K}(G_1,G_2)\leq\epsilon}}\sup_{s>0}|\widehat{G}_1(s)-\widehat{G}_2(s)|=0.$$
For an arbitrary choice of $m_1=\frak{g}_1-\delta\geq\frak{m}$, one
have
$$\sup_{\substack{G_2\in\mathcal{G}(\frak{g}_1-\delta, m_2)\\
\mathcal{K}(G_1,G_2)\leq\epsilon}}\sup_{s>0}|\widehat{G}_1(s)-\widehat{G}_2(s)|=\epsilon_2\leq\epsilon.$$
(The exact value of $\epsilon_2$ is not important.) In this case, similarly
to \eqref{2.24}
\begin{equation}
\label{2.26} m_2=\frac{(\frak{g}_1-\delta)^2}{1-\epsilon_2},
\end{equation}
and similarly to \eqref{2.25},
\begin{equation}
\label{2.27}
\begin{aligned}
\sup_{G_1,G_2\in\mathcal{G}\left(\frak{g}_1-\delta,\frac{(\frak{g}_1-\delta)^2}
{1-\epsilon}\right)} \big|\gamma_{G_1}-\gamma_{G_2}\big|
&=\epsilon_2-\epsilon_2\ell^*,\\
\end{aligned}
\end{equation}
where $\ell^*$ is the solution of the equation
$z=\mathrm{e}^{-\mu(\frak{g}_1-\delta)+\mu(\frak{g}_1-\delta)z}$. It
is readily seen that $\ell^*>\ell$. (The presence of positive
$\delta$ yields the value of the root of functional equation greater
compared to the case where $\delta$ is not presented (i.e.
$\delta=0$).)

Keeping in mind that $\ell^*>\ell$ and $\epsilon_2\leq\epsilon$,
from \eqref{2.27} we have:
\begin{equation}
\label{2.28}
\begin{aligned}
\sup_{G_1,G_2\in\mathcal{G}\left(\frak{g}_1-\delta,\frac{(\frak{g}_1-\delta)^2}
{1-\epsilon}\right)} \big|\gamma_{G_1}-\gamma_{G_2}\big|
&\leq\epsilon-\epsilon\ell.
\end{aligned}
\end{equation}

Hence, from relations \eqref{2.25} and \eqref{2.28}
we arrive at the following theorem.

\begin{thm}
\label{thm1} For any probability distribution functions $G_1(x)$ and
$G_2(x)$ belonging to the class $\mathcal{G}(\frak{g}_1,
\frak{g}_2)$ and satisfying condition \eqref{AC} we have as follows.

If $\epsilon<1-\frac{\frak{g}_1^2}{\frak{g}_2}$, then
\begin{equation}\label{T1}
\sup_{\substack{G_1,G_2\in\mathcal{G}(\frak{g}_1,
\frak{g}_2)\\\mathcal{K}(G_1,G_2)\leq\epsilon}}\big|\gamma_{G_1}-\gamma_{G_2}\big|\leq\epsilon-\epsilon\ell.
\end{equation}
Otherwise,
$$
\sup_{\substack{G_1,G_2\in\mathcal{G}(\frak{g}_1,
\frak{g}_2)\\\mathcal{K}(G_1,G_2)\leq\epsilon}}\big|\gamma_{G_1}-\gamma_{G_2}\big|
=1+\frac{\frak{g}_1^2}{\frak{g}_2}(\ell-1)-\ell,
$$
where $\ell$ is the least root of the equation
$$
z=\mathrm{e}^{-\mu\frak{g}_1+\mu\frak{g}_1z}.
$$
\end{thm}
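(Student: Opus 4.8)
The plan is to assemble the bound \eqref{T1} from the two reduction scenarios prepared in the preceding analysis, and to dispose of the degenerate regime $\epsilon\geq1-\frac{\frak{g}_1^2}{\frak{g}_2}$ separately. The starting point is relation \eqref{2.9}: condition \eqref{AC} forces $\sup_{s\geq0}|\widehat{G}_1(s)-\widehat{G}_2(s)|\leq\epsilon$, so the whole argument can be run in the transform domain, where the Vasilyev--Kozlov extremals \eqref{2.1}--\eqref{2.2} and Rolski's root formulas \eqref{2.4}--\eqref{2.5} are available.

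For the principal case $\epsilon<1-\frac{\frak{g}_1^2}{\frak{g}_2}$, I would first argue that the supremum of $|\gamma_{G_1}-\gamma_{G_2}|$ over admissible pairs is attained, in the limit, by an extremal configuration inside the enlarged class $\mathcal{G}(\mathcal{M})$: one distribution pushed toward the infimal transform $\mathrm{e}^{-s\frak{g}_1}$ and the other toward the maximal transform of the form \eqref{2.2}. The crucial observation is that the infimal transform depends only on $\frak{g}_1$, so the ``lower'' distribution may be treated as a member of $\mathcal{G}(\frak{g}_1,m_2)$ with a fictitious second moment $m_2$ to be determined. The density Lemmas \ref{lem1} and \ref{cor1} legitimize this replacement, since they let us approximate any member of $\mathcal{G}(\mathcal{M})$ arbitrarily well in Kolmogorov's metric by a member of a prescribed two-moment class without altering the relevant transforms. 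Imposing \eqref{2.9''} and equating the genuine transform distance \eqref{2.6'''} to $\epsilon_1$ pins down $m_2=\frac{\frak{g}_1^2}{1-\epsilon_1}$ as in \eqref{2.24}; feeding this value into Rolski's width formula then yields the chain \eqref{2.25}, giving $\sup|\gamma_{G_1}-\gamma_{G_2}|=\epsilon_1-\epsilon_1\ell\leq\epsilon-\epsilon\ell$.

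Next I would treat the shifted configuration $m_1=\frak{g}_1-\delta\geq\frak{m}$, $\delta>0$, in the same fashion: the fictitious second moment is now $\frac{(\frak{g}_1-\delta)^2}{1-\epsilon_2}$ as in \eqref{2.26}, and the analogue of the previous step produces \eqref{2.27}, namely the width $\epsilon_2-\epsilon_2\ell^*$, where $\ell^*$ solves $z=\mathrm{e}^{-\mu(\frak{g}_1-\delta)+\mu(\frak{g}_1-\delta)z}$. Here I would record the monotonicity fact $\ell^*>\ell$ (a smaller mean lowers the effective value $\mu\frak{g}_1$ and hence raises the least root), together with $\epsilon_2\leq\epsilon$; combining these two inequalities collapses \eqref{2.27} into \eqref{2.28}, so that this configuration too is dominated by $\epsilon-\epsilon\ell$. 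Taking the supremum over all admissible choices of $(m_1,m_2)$ in $\mathcal{M}(\frak{g}_1,\frak{g}_2)$ and over $\delta$, relations \eqref{2.25} and \eqref{2.28} together give \eqref{T1}.

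Finally, in the regime $\epsilon\geq1-\frac{\frak{g}_1^2}{\frak{g}_2}$ the constraint \eqref{AC} becomes vacuous: by \eqref{2.6} the transform distance over the entire class never exceeds $1-\frac{\frak{g}_1^2}{\frak{g}_2}$, so the supremum is unconstrained and, by \eqref{2.4}--\eqref{2.5}, is attained by the exponential/two-point extremal pair, yielding the stated value $1+\frac{\frak{g}_1^2}{\frak{g}_2}(\ell-1)-\ell$ of \eqref{2.8}. The step I expect to be the main obstacle is the reduction in the principal case: one must justify rigorously that replacing the true pair $(G_1,G_2)\in\mathcal{G}(\frak{g}_1,\frak{g}_2)$ by the fictitious extremal pair in $\mathcal{G}(\frak{g}_1,m_2)$ can only increase $|\gamma_{G_1}-\gamma_{G_2}|$, and that the density lemmas indeed make the Kolmogorov-constrained supremum \eqref{2.9''} coincide with the unconstrained transform distance \eqref{2.6'''} rather than fall strictly below it. This is where the convexity of $\widehat{G}(\mu-\mu z)-z$ underlying Rolski's formulas, together with the monotone dependence of the least root on the transform, has to be invoked with care.
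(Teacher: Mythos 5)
Your proposal is correct and follows essentially the same route as the paper's own argument: the transfer of \eqref{AC} to the transform domain via \eqref{2.9}, the reduction via Lemmas \ref{lem1} and \ref{cor1} to the fictitious class $\mathcal{G}\left(\frak{g}_1,\frac{\frak{g}_1^2}{1-\epsilon_1}\right)$ with $m_2$ determined by \eqref{2.24}, the two extremal configurations \eqref{2.25} and \eqref{2.28} combined to give \eqref{T1}, and the disposal of the regime $\epsilon\geq1-\frac{\frak{g}_1^2}{\frak{g}_2}$ via \eqref{2.6} and \eqref{2.8}. The reduction step you flag as the main obstacle is treated in the paper at the same informal level of rigor (and your only slip is terminological: the infimal transform $\mathrm{e}^{-s\frak{g}_1}$ belongs to the degenerate, not exponential, distribution), so your attempt matches the published proof.
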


\begin{rem}\label{rem1}
Theorem \ref{thm1} is formulated for the class of distributions
$\mathcal{G}(\frak{g}_1,\frak{g}_2)$. As it was mentioned, the
parameters $\frak{g}_1$ and $\frak{g}_2$ are usually unknown. For
practical applications, the errors for these parameters should be
taken into account. Let us assume that ranges of these parameters
are known. For example, $g_1^{\mathrm{lower}}\leq\frak{g}_1\leq
g_1^{\mathrm{upper}}$ and $g_2^{\mathrm{lower}}\leq\frak{g}_2\leq
g_2^{\mathrm{upper}}$ are assumed to be satisfied with a given confidence probability $P$.
It follows from Theorem \ref{thm1} that
if ${g}_2^{\mathrm{lower}}>(g_1^{\mathrm{upper}})^2$ and
$\epsilon<1-\frac{(g_1^{\mathrm{upper}})^2}{{g}_2^{\mathrm{lower}}}$,
 then the lower bound
$\ell$ (the least root of equation \eqref{LR}) for the least positive root $\gamma_G$, should be
replaced by the smaller value given by the least root of the
equation
\begin{equation*}
z=\mathrm{e}^{-\mu g_1^{\mathrm{upper}}+\mu g_1^{\mathrm{upper}}z}.
\end{equation*}
This new value should replace $\ell$ in \eqref{T1} to be used in real applications.

For a nontrivial class $\mathcal{G}(\frak{g}_1,\frak{g}_2)$, and large enough volume of observations $N$, the above condition ${g}_2^{\mathrm{lower}}>(g_1^{\mathrm{upper}})^2$ is natural.

\end{rem}

\section{Asymptotic bounds for characteristics in large loss queueing systems}\label{sec3}
\subsection{Loss probability in the $GI/M/1/n$ queueing system}\label{sec3.1}
In this section we apply the results of Section \ref{sec2} to large
loss $GI/M/1/n$ queueing systems (the parameter $n$ is assumed to be large). The results of this section are elementary.
However, they serve as a basis for the analysis of the more realistic queueing systems,
which are studied in Sections \ref{sec3.1A} and \ref{sec3.2}. The
bounds for the loss probability obtained for this elementary system are then also used for a
more delicate continuity analysis of the loss probability in
$M/M/1/n$ queueing systems in Section \ref{sec4}.

Recall the known asymptotic result for the loss
probability in the $GI/M/1/n$ queueing system as $n\to\infty$.

 Let $\widehat{A}(s)$ denote the Laplace-Stieltjes transform of
the interarrival time probability distribution function $A(x)$,
let $\mu$ denote the reciprocal of the expected service time, let
$\rho$ denote the load,
$\rho=-\frac{1}{\mu\widehat{A}^\prime(0)}$, which is assumed to be
less than 1, and let $\alpha$ denote the positive least root of the
functional equation $z=\widehat{A}(\mu-\mu z)$. It has been shown in
\cite{Abramov 2002} that, as $n\to\infty$, the loss probability
$P_{\mathrm{loss}}(n)$ is asymptotically represented as follows:
\begin{equation}\label{3.1}
P_{\mathrm{loss}}(n)=\frac{(1-\rho)[1+\mu\widehat{A}^\prime(\mu-\mu\alpha)]\alpha^n}
{1-\rho-\rho[1+\mu\widehat{A}^\prime(\mu-\mu\alpha)]\alpha^n}+o(\alpha^{2n}).
\end{equation}

Notice, that the function $\Psi(z)=\widehat{A}(\mu-\mu z)-z$ is a
convex function in variable $z$. There are two roots $z=\alpha$ and $z=1$
in the interval [0,1], and
$\Psi^\prime(\alpha)=-\mu\widehat{A}^\prime(\mu-\mu \alpha)-1>-1$.
Therefore, according to convexity we have the inequality:
\begin{equation}\label{3.1+}
\Psi^\prime(\alpha)\leq-\frac{\Psi(0)}{\alpha}.
\end{equation}
From \eqref{3.1+} we obtain:
\begin{equation*}\label{3.1++}
1+\mu\widehat{A}^\prime(\mu-\mu\alpha)\geq\frac{\widehat{A}(\mu)}{\alpha},
\end{equation*}
and therefore
\begin{equation}\label{3.1+++}
\frac{\widehat{A}(\mu)}{\alpha}\leq
1+\mu\widehat{A}^\prime(\mu-\mu\alpha)\leq1.
\end{equation}

Assume that $A(x)\in\mathcal{G}(\frak{g}_1, \frak{g}_2)$ is
unknown, but the first two moments $\frak{g}_1$ and
$\frak{g}_2$ are given. In this and the following examples we do not discuss the statistical bounds for these moments such as those considered in Remark \ref{rem1}. So, all our examples are built on the basis of the moments $\frak{g}_1$ and
$\frak{g}_2$ only.

Assume that $A_{\mathrm{emp}}(x)$ is an empirical
probability distribution function of this class, its
Laplace-Stieltjes transform is $\widehat{A}_{\mathrm{emp}}(s)$,
the root of the corresponding functional equation
$z=\widehat{A}_{\mathrm{emp}}(\mu-\mu z)$ is $\alpha^*$, and
according to available information, Kolmogorov's
distance between $A_{\mathrm{emp}}(x)$ and $A(x)$ is
$\mathcal{K}(A, A_{\mathrm{emp}})\leq\epsilon$.

Consider the case
$\epsilon<1-\frac{\frak{g}_1^2}{\frak{g}_2}$ ($\frak{g}_1^2\neq
\frak{g}_2$). Since $A(x)$ is unknown, $\widehat{A}(s)$ will be replaced by $\widehat{A}_{\mathrm{emp}}(s)$ in \eqref{3.1+++}. The numerator of the left-hand side of  \eqref{3.1+++} is replaced by the extremal element $\mathrm{e}^{-\mu\frak{g}_1}$, which is not greater than that original. The corresponding denominator is replaced by $(\alpha^*+\epsilon-\epsilon\ell)$, which is not smaller than that original
$\alpha^*$.
Assume that $\epsilon$ is such small that
$\alpha^*-\epsilon+\epsilon\ell>\ell$ and
$\alpha^*+\epsilon-\epsilon\ell<1+\frac{\frak{g}_1^2}{\frak{g}_2}(\ell-1)$.
Then we have:
\begin{equation}\label{3.1++++}
\frac{\mathrm{e}^{-\mu\frak{g}_1}}{\alpha^*+\epsilon-\epsilon\ell}\leq
1+\mu\widehat{A}_{\mathrm{emp}}^\prime(\mu-\mu\alpha^*)\leq1.
\end{equation}
Note, that the assumption on $\epsilon$ under which \eqref{3.1++++} is satisfied can be written as
\begin{equation}\label{CE}
\epsilon<\min\left\{1-\frac{\frak{g}_1^2}{\frak{g}_2}, \ \frac{\alpha^*-\ell}{1-\ell}, \
\frac{\frak{g}_2(1-\alpha^*)-\frak{g}_1^2(1-\ell)}{\frak{g}_2(1-\ell)}\right\}.
\end{equation}

Using \eqref{3.1++++}, in the case of small $\epsilon$ satisfying \eqref{CE}, according to Theorem \ref{thm1} for $n$ large enough
we have the following two inequalities for lower
$\underline{P}(n)$ and upper $\overline{P}(n)$ levels of the loss
probability:
\begin{eqnarray}
\underline{P}(n)&=&\frac{(1-\rho)\mathrm{e}^{-\mu\frak{g}_1}(\alpha^*-\epsilon+\epsilon\ell)^n}
{(1-\rho)(\alpha^*+\epsilon-\epsilon\ell)-\rho\mathrm{e}^{-\mu\frak{g}_1}(\alpha^*-\epsilon+\epsilon\ell)^n},\label{3.2}\\
\overline{P}(n)&=&\frac{(1-\rho)(\alpha^*+\epsilon-\epsilon\ell)^n}
{1-\rho-\rho(\alpha^*+\epsilon-\epsilon\ell)^n}.\label{3.3}
\end{eqnarray}

Therefore, for large $n$ we have the following asymptotic bounds for $
P_{\mathrm{loss}}(n)$:
\begin{equation}\label{3.3+}
\begin{aligned}
&\frac{(1-\rho)\mathrm{e}^{-\mu\frak{g}_1}(\alpha^*-\epsilon+\epsilon\ell)^n}
{(1-\rho)(\alpha^*+\epsilon-\epsilon\ell)-\rho\mathrm{e}^{-\mu\frak{g}_1}(\alpha^*-\epsilon+\epsilon\ell)^n}\\
&\leq P_{\mathrm{loss}}(n)\\ &\leq
\frac{(1-\rho)(\alpha^*+\epsilon-\epsilon\ell)^n}
{1-\rho-\rho(\alpha^*+\epsilon-\epsilon\ell)^n}.
\end{aligned}
\end{equation}

If $\epsilon\geq1-\frac{\frak{g}_1^2}{\frak{g}_2}$, then the terms
$(\alpha^*+\epsilon-\epsilon\ell)$ in \eqref{3.2}, \eqref{3.3} and
\eqref{3.3+} should be replaced by these
$\left[1+\frac{\frak{g}_1^2}{\frak{g}_2}(\ell-1)\right]$, and the
terms $(\alpha^*-\epsilon+\epsilon\ell)$ in \eqref{3.2} and
\eqref{3.3+} should be replaced by $\ell$.

\subsection{Losses from the $M/GI/1$ buffer model}\label{sec3.1A} In this section we obtain lower and upper
bounds for the loss probability of the following $M/GI/1$ buffer
model \cite{Abramov 2004}. Assume that messages (units) arrive in
the buffer of large capacity $N$. Units arrive by batches, the sizes
of which are independent and identically distributed positive
integer random variables $\nu_i$ with expectation $c$. In addition,
the random variables $\nu_i$ are assumed to be bounded, i.e.
$\mathrm{P}\{\nu^{\mathrm{lower}}\leq\nu_i\leq\nu^{\mathrm{upper}}\}=1.$
Interarrival times of batches are exponentially distributed with
parameter $\lambda$, and the service (processing) times of these
batches are independent and identically distributed random variables
with the probability distribution function $B(x)$ and expectation
$b$. If upon arrival of a batch the number of units in the system
exceeds the buffer capacity, then the entire batch loses from the
system. In addition, there is probability $p$ that an arrival batch
of units does not join the system due to transmission error. In all
other situations an arrival batch of units joins the system and
waits for its processing.

Assume that $\rho=\lambda b>1$. Then, for the lost probability the
following representation has been derived in \cite{Abramov 2004}
(see relation (5.3) on page 757):

\begin{equation}\label{MG1.1}
\pi_N=\frac{p+\rho-1}{\rho}\cdot\frac{(\rho-1)+p[1+\lambda
\widehat{B}(\lambda-\lambda\beta)]\mathrm{E}\beta^{\zeta(N)}}{(\rho-1)+[1+\lambda
\widehat{B}(\lambda-\lambda\beta)]\mathrm{E}\beta^{\zeta(N)}}+o(\mathrm{E}\beta^{\zeta(N)}),
\end{equation}
where $\widehat{B}(s)$ denotes the Laplace-Stieltjes transform of the probability distribution function $B(x)$, $\beta$ is the least in absolute value root of the functional equation $z=\widehat{B}(\lambda-\lambda z)$, and
$$
\zeta(N)=\sup\left\{m: \sum_{i=1}^m\nu_i\leq N\right\}.
$$

Notice that since
$\mathrm{P}\{\nu^{\mathrm{lower}}\leq\nu_i\leq\nu^{\mathrm{upper}}\}=1$,
then the similar property for the random variable $\zeta(N)$ is
satisfied:
$\mathrm{P}\{\zeta^{\mathrm{lower}}(N)\leq\zeta(N)\leq\zeta^{\mathrm{upper}}(N)\}=1$,
and, in addition, since as $N\to\infty$
$$
\mathrm{P}\left\{\lim_{N\to\infty}\frac{\zeta(N)}{N}=\frac{1}{c}\right\}=1,
$$
then, as $N\to\infty$,
\begin{equation}\label{MG1.2}
\mathrm{E}\beta^{\zeta(N)}=\beta^{\frac{N}{c}}[1+o(1)].
\end{equation}
Substituting \eqref{MG1.2} into \eqref{MG1.1} we obtain:
\begin{equation}\label{MG1.3}
\pi_N=\frac{p+\rho-1}{\rho}\cdot\frac{(\rho-1)+p[1+\lambda
\widehat{B}(\lambda-\lambda\beta)]\beta^{\frac{N}{c}}}{(\rho-1)+[1+\lambda
\widehat{B}(\lambda-\lambda\beta)]\beta^{\frac{N}{c}}}+o(\beta^{\frac{N}{c}}).
\end{equation}

Similarly to \eqref{3.1+++} for the term
$1+\lambda\widehat{B}^\prime(\lambda-\lambda\beta)$ we have the
inequalities:
\begin{equation}\label{3.7+}
\frac{\widehat{B}(\lambda)}{\beta}\leq1+\lambda\widehat{B}^\prime(\lambda-\lambda\beta)\leq1.
\end{equation}
Assume now that $B(x)\in\mathcal{G}_2(\frak{g}_1, \frak{g}_2)$ is
unknown, but with the first two moments $\frak{g}_1$ and
$\frak{g}_2$ are given, assume that $B_{\mathrm{emp}}(x)$ is an
empirical probability distribution function of this class, its
Laplace-Stieltjes transform is $\widehat{B}_{\mathrm{emp}}(s)$, the
least positive root of the corresponding functional equation
$z=\widehat{B}_{\mathrm{emp}}(\lambda-\lambda z)$ is $\beta^*$, and
assume that according to an available information the Kolmogorov
distance between $B_{\mathrm{emp}}(x)$ and $B(x)$ is $\mathcal{K}(B,
B_{\mathrm{emp}})\leq\epsilon$. Similarly to \eqref{CE} assume that $\epsilon$ satisfy the inequality
\begin{equation}\label{CE1}
\epsilon<\min\left\{1-\frac{\frak{g}_1^2}{\frak{g}_2}, \ \frac{\beta^*-\ell}{1-\ell}, \
\frac{\frak{g}_2(1-\beta^*)-\frak{g}_1^2(1-\ell)}{\frak{g}_2(1-\ell)}\right\}.
\end{equation}

Then similarly to
\eqref{3.1++++} we have
\begin{equation}\label{3.7++}
\frac{\mathrm{e}^{-\lambda\frak{g}_1}}{\beta^*+\epsilon-\epsilon\ell}\leq
1+\lambda\widehat{B}_{\mathrm{emp}}^\prime(\lambda-\lambda\beta^*)\leq1.
\end{equation}
Under same assumption \eqref{CE1},
using \eqref{3.7++} for large $N$ we arrive at the inequalities for
lower, $\underline{\pi}_N$, and upper, $\overline{\pi}_N$, levels of
this loss probability:
$$
\underline{\pi}_N\geq\frac{p+\rho-1}{\rho}\cdot\frac{(\rho-1)(\beta^*+\epsilon-\epsilon\ell)
+p\mathrm{e}^{-\lambda\frak{g}_1}(\beta^*-\epsilon+\epsilon\ell)^{\frac{N}{c}}}
{(\rho-1)(\beta^*+\epsilon-\epsilon\ell)+(\beta^*+\epsilon-\epsilon\ell)^{\frac{N}{c}+1}},
$$
$$
\overline{\pi}_N\leq\frac{p+\rho-1}{\rho}\cdot\frac{(\rho-1)(\beta^*+\epsilon-\epsilon\ell)
+p(\beta^*+\epsilon-\epsilon\ell)^{\frac{N}{c}+1}}
{(\rho-1)(\beta^*+\epsilon-\epsilon\ell)+
\mathrm{e}^{-\lambda\frak{g}_1}(\beta^*-\epsilon+\epsilon\ell)^{\frac{N}{c}}}.
$$
Hence, we arrive at the following statement.

\begin{prop}
Under the above assumptions given in this section for the $M/GI/1$
buffer model with large parameter $N$, in the case
$$
\epsilon<\min\left\{1-\frac{\frak{g}_1^2}{\frak{g}_2}, \ \frac{\beta^*-\ell}{1-\ell}, \
\frac{\frak{g}_2(1-\beta^*)-\frak{g}_1^2(1-\ell)}{\frak{g}_2(1-\ell)}\right\}
$$
for the loss probabilities
$\pi_N$
we have the inequalities:
\begin{equation*}
\begin{aligned}
&\frac{p+\rho-1}{\rho}\cdot\frac{(\rho-1)(\beta^*+\epsilon-\epsilon\ell)
+p\mathrm{e}^{-\lambda\frak{g}_1}(\beta^*-\epsilon+\epsilon\ell)^{\frac{N}{c}}}
{(\rho-1)(\beta^*+\epsilon-\epsilon\ell)+(\beta^*+\epsilon-\epsilon\ell)^{\frac{N}{c}+1}}\\
&\leq\pi_N\\
&\leq\frac{p+\rho-1}{\rho}\cdot\frac{(\rho-1)(\beta^*+\epsilon-\epsilon\ell)
+p(\beta^*+\epsilon-\epsilon\ell)^{\frac{N}{c}+1}}
{(\rho-1)(\beta^*+\epsilon-\epsilon\ell)+
\mathrm{e}^{-\lambda\frak{g}_1}(\beta^*-\epsilon+\epsilon\ell)^{\frac{N}{c}}}.
\end{aligned}
\end{equation*}

\end{prop}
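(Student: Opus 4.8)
The plan is to treat the asymptotic representation \eqref{MG1.3} as a scalar function of the single quantity $S:=[1+\lambda\widehat{B}^\prime(\lambda-\lambda\beta)]\beta^{N/c}$ and to sandwich $S$ using the estimates already assembled in this section. Writing $\pi_N=\frac{p+\rho-1}{\rho}\,h(S)+o(\beta^{N/c})$ with $h(S)=\frac{(\rho-1)+pS}{(\rho-1)+S}$, I would first record two positivity facts that prevent any inequality from reversing: the prefactor $\frac{p+\rho-1}{\rho}$ is strictly positive (here $\rho=\lambda b>1$ and $p\in[0,1]$), and $S>0$ because the bracket $1+\lambda\widehat{B}^\prime(\lambda-\lambda\beta)$ is bounded below by $\widehat{B}(\lambda)/\beta>0$ in \eqref{3.7+} while $\beta^{N/c}>0$. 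Consequently every denominator appearing below is positive.

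Next I would produce a two-sided enclosure of $S$ from two independent inputs. For the root $\beta$ of $z=\widehat{B}(\lambda-\lambda z)$, the defining equation is exactly of the form treated in Theorem \ref{thm1} with $\mu$ replaced by $\lambda$, and the stability hypothesis $\rho=\lambda\frak{g}_1>1$ is precisely the condition $\mu\frak{g}_1>1$ required there; hence, under the smallness assumption \eqref{CE1}, Theorem \ref{thm1} yields $|\beta-\beta^*|\leq\epsilon-\epsilon\ell$, i.e. $\beta^*-\epsilon+\epsilon\ell\leq\beta\leq\beta^*+\epsilon-\epsilon\ell$. Since $0<\beta<1$ and $N/c>0$, the map $\beta\mapsto\beta^{N/c}$ is increasing, so $(\beta^*-\epsilon+\epsilon\ell)^{N/c}\leq\beta^{N/c}\leq(\beta^*+\epsilon-\epsilon\ell)^{N/c}$. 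For the bracket I would use \eqref{3.7+}, replacing $\widehat{B}(\lambda)$ by its extremal lower value $\mathrm{e}^{-\lambda\frak{g}_1}$ from \eqref{2.1} and $\beta$ by its upper estimate $\beta^*+\epsilon-\epsilon\ell$, which is the content of \eqref{3.7++}: $\frac{\mathrm{e}^{-\lambda\frak{g}_1}}{\beta^*+\epsilon-\epsilon\ell}\leq 1+\lambda\widehat{B}^\prime(\lambda-\lambda\beta)\leq1$. Multiplying the two enclosures gives $S_{\min}\leq S\leq S_{\max}$ with $S_{\min}=\frac{\mathrm{e}^{-\lambda\frak{g}_1}}{\beta^*+\epsilon-\epsilon\ell}(\beta^*-\epsilon+\epsilon\ell)^{N/c}$ and $S_{\max}=(\beta^*+\epsilon-\epsilon\ell)^{N/c}$.

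Finally I would feed these into $h$ by bounding its numerator and denominator separately. Because $p\geq0$ and $\rho-1>0$, both $(\rho-1)+pS$ and $(\rho-1)+S$ are nondecreasing in $S$, so $h(S)\leq\frac{(\rho-1)+pS_{\max}}{(\rho-1)+S_{\min}}$ and $h(S)\geq\frac{(\rho-1)+pS_{\min}}{(\rho-1)+S_{\max}}$. Substituting $S_{\min},S_{\max}$, clearing the inner fraction by multiplying numerator and denominator through by $\beta^*+\epsilon-\epsilon\ell$, and multiplying by the positive prefactor $\frac{p+\rho-1}{\rho}$ reproduces verbatim the displayed bounds for $\overline{\pi}_N$ and $\underline{\pi}_N$; the error term $o(\beta^{N/c})$ is negligible for $N$ large and is absorbed into the inequalities. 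The only real obstacle is the bookkeeping of which endpoint of $S$ belongs in the numerator and which in the denominator for each bound — this is dictated entirely by the signs $p\geq0$, $\rho-1>0$ together with the monotonicity of $\beta\mapsto\beta^{N/c}$ — and the attendant check that every denominator stays positive, which $\rho>1$ and condition \eqref{CE1} guarantee by keeping $\beta^*\pm(\epsilon-\epsilon\ell)$ inside the admissible root interval.
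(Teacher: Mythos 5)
Your proposal is correct and takes essentially the same route as the paper: enclose the true root $\beta$ in $[\beta^*-\epsilon+\epsilon\ell,\ \beta^*+\epsilon-\epsilon\ell]$ via Theorem \ref{thm1} under condition \eqref{CE1}, bound the bracket $1+\lambda\widehat{B}^\prime(\lambda-\lambda\beta)$ between $\mathrm{e}^{-\lambda\frak{g}_1}/(\beta^*+\epsilon-\epsilon\ell)$ and $1$ as in \eqref{3.7+} and \eqref{3.7++}, and substitute these extremes into the asymptotic formula \eqref{MG1.3}, absorbing the $o$-term for large $N$ exactly as the paper does. Your final step of multiplying numerator and denominator by $(\beta^*+\epsilon-\epsilon\ell)$ reproduces the displayed bounds verbatim.
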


\subsection{The buffers system with priorities}\label{sec3.2} In this section we
study the following special model considered in \cite{Abramov 2008b}
(see also \cite{Abramov 2008c}). This model describes processing
messages in priority queueing systems with large buffers, and the
effective bandwidth problem. Another interpretation of this system
is a transportation system, in which vehicles pass by with some time
intervals to pick up $C$ passengers, at the most, with accordance of
their priority status (different types of passenger are supposed to be). The formal description of the problem, given
in terms of the buffers system with priorities, is as follows.

Suppose that arrival process of customers in the system is a renewal
process $\mathcal{A}(t)$, with the expected value of a renewal
period $\frac{1}{\lambda}$. There are $l$ types of customers, and
there is the probability $p_j>0$ that an arriving customer belongs
to the type $j$ $\bigg(\sum\limits_{j=1}^{l}p^{(j)}=1\bigg)$. Therefore, the time
intervals between arrivals of the type $j$ customers are independent
and identically distributed random variables with expectation
$\frac{1}{\lambda p^{(j)}}$.

Assume that for $i<j$, the customers of the type $i$ have higher
priority than the customers of the type $j$, so customers of type 1 are
those of the highest priority and customers of the type $l$ have the
lowest priority. Assume that customers leave the system by groups of
$C$ as follows. If the number of customers in the system is not
greater than $C$, then all (remaining) customers leave the system.
Otherwise, if the number of customers in the system exceeds the
value $C$, then customers leave according to their priority: a
higher priority customer has an advantage to leave earlier. For
example if $C=5$, $l=3$, and immediately before departure moment
there are three customers of type 1, three customer of type 2 and
one customers of type 3 (i.e. seven customers in total), then after
the departure there will only remain one customer of type 2 and one
customer of type 3 in the system. Times between departures are
assumed to be exponentially distributed with parameter $\mu$. Assume
that $\frac{\lambda}{C\mu}<1$.

The buffer capacities for the type $j$ customers is denoted
$N^{(j)}$. Assume that all of the capacities $N^{(j)}$,
$j=1,2,\ldots,l$ are large enough, i.e. they are assumed to increase
to infinity according to the rule that roughly is explained as
follows. For specific numbers
$0<\alpha_1<\alpha_2<\ldots<\alpha_l<1$, the meaning of which is
explained later, it is assumed that for any $j<k$
\begin{equation}\label{AA}
\alpha_j^{N_j}=o\left(\alpha_k^{N_k}\right),
\end{equation}
where $N_j:=\sum\limits_{i=1}^{j}N^{(i)}$, $j=1,2,\ldots,l$, is the
cumulative buffer content of customers of the first $j$ types.

Let $p_k:=\sum\limits_{j=1}^{k}p^{(j)}$ be the probability of arrival of a
customer of one of the first $k$ types ($p_l\equiv1$). Then the
times between arrivals of customers, who are related to one of the
first $k$ types, $k=1,2,\ldots,l$, are independent and identically
distributed with expectation $\frac{1}{\lambda p_k}$.

Since $\frac{\lambda}{C\mu}<1$, then $\rho_k=\frac{\lambda
p_k}{C\mu}<1$ for all $k=1,2,\ldots,l$. Let $A_k(x)$ denote the
probability distribution function of an interarrival time of the
cumulative arrival process generated by customers of the first $k$ types,
and let $\widehat{A}_k(s)$ ($s\geq0$) denote the Laplace-Stieltjes
transform of $A_k(x)$. For the Laplace-Stieltjes transform
$\widehat{A}_k(s)$ we have:
\begin{equation}\label{3.41}
\begin{aligned}
\widehat{A}_k(s)&=\sum_{i=1}^{\infty}p_k(1-p_k)^{i-1}[\widehat{A}_l(s)]^i\\
&=p_k\widehat{A}_l(s)\frac{1}{1-(1-p_k)\widehat{A}_l(s)}.
\end{aligned}
\end{equation}

Let $\alpha_k$ denote the least positive root of the functional
equation
\begin{equation}\label{3.5}
z=\widehat{A}_k(\mu-\mu z^C)
\end{equation}

(There is a unique root of this functional equation in the
interval (0,1), see \cite{Abramov 2008b}.)

Since $\rho_1<\rho_2<\ldots<\rho_l$, then we also have
$\alpha_1<\alpha_2<\ldots<\alpha_l$. It is shown in \cite{Abramov
2008b} and \cite{Abramov 2008c} that under assumptions \eqref{AA},
the loss probability of type $k$ customers is given by the
asymptotic formula
\begin{equation}\label{3.4}
\begin{aligned}
\pi_k&=\frac{(1-\rho_k)[1+C\mu
\widehat{A}_k^\prime(\mu-\mu\alpha_k^C)]\alpha_k^{N_k}}{(1-\rho_k)(1+\alpha_k+\alpha_k^{2}+\ldots+\alpha_k^{C-1})
-\rho_k[1+C\mu
\widehat{A}_k^\prime(\mu-\mu\alpha_k^C)]\alpha_k^{N_k}}\\
&\ \ \ +o\left(\alpha_k^{2N_k}\right)
\end{aligned}
\end{equation}
(The assumption $\alpha_j^{N_j}=o\left(\alpha_k^{N_k}\right)$,
$j<k$, given in \eqref{AA} actually means that the losses of higher
priority customers occur much more rarely compared to the losses of
lower priority customers.)

Our task is to find lower and upper bounds for $\pi_k$. Note that
asymptotic relation \eqref{3.4} is similar to that \eqref{3.1} of
the stationary loss probability in the $GI/M/1/n$ queueing system
with large $n$. The functional equation \eqref{3.5} is a
more general than that considered before in Sections \ref{sec1} and \ref{sec2} (that functional equation is a particular case when $C=1$). For this functional
equation, the lower and upper bounds are as follows. Let
$\frak{g}_1:=\frac{1}{\lambda}$  and $\frak{g}_2$ denote the first
and, respectively, the second moments of the probability distribution
function $A_l(x)$. (In the sequel,
the notation $\frak{g}_1$  is used instead of $\frac{1}{\lambda}$.) Then,
for $k=1,2,\ldots,l$, from the representation of \eqref{3.41} one
can obtain the first and second moments of the probability
distribution function $A_k(x)$:
$$
\int_0^\infty x\mathrm{d}A_k(x)=\frac{\frak{g}_1}{p_k},
$$
and, respectively,
$$
\int_0^\infty
x^2\mathrm{d}A_k(x)=\frac{2(1-p_k)\frak{g}_{1}^{2}+p_k\frak{g}_2}{p_k^{2}}.
$$
Furthermore, let $\ell$ denote the
least positive root of the functional equation
\begin{equation}\label{3.6-}
z=\exp\left(-\frac{\mu\frak{g}_1+\mu\frak{g}_1z^{C}}{p_k}\right).
\end{equation}
(We use the same notation $\ell$ as it was used for the root of the simpler functional equation in Sections \ref{sec1} and \ref{sec2}, because the consideration of the more general functional equation \eqref{3.6-} leads to
 an elementary extension of the result of Rolski
\cite{Rolski 1972} and consequently to elementary extension of the results in Sections \ref{sec1} and \ref{sec2}). Following this, we have:
\begin{equation}\label{3.6}
\inf \alpha_k=\inf_{A_k\in\mathcal{G}\Big(\frac{\frak{g}_1}{p_k},
\frac{2(1-p_k)\frak{g}_1^2+p_k\frak{g}_2}{p_k^{2}}\Big)}\alpha_{A_k}=\ell,
\end{equation}
and
\begin{equation}\label{3.7}
\sup \alpha_k=\sup_{A_k\in\mathcal{G}\Big(\frac{\frak{g}_1}{p_k},
\frac{2(1-p_k)\frak{g}_1^2+p_k\frak{g}_2}{p_k^{2}}\Big)}\alpha_{A_k}=1+\frac{\frak{g}_1^{2}}
{2(1-p_k)\frak{g}_1^{2}+p_k\frak{g}_2}(\ell-1),
\end{equation}
where $\alpha_{A_k}$ is the notation for the root of the above functional equation associated with the probability distribution $A_k(x)$. (Along with the earlier notation $\alpha_k$, this notation is required for our purposes because it is spoken about the upper and lower bounds associated with the class of probability distribution functions defined in \eqref{3.6}, \eqref{3.7} and the equations appearing later in this section.)

Let us assume now that
$\epsilon<1-\frac{\frak{g}_1^{2}}{2(1-p_k)\frak{g}_1^{2}+p_k\frak{g}_2}$.
Then according to the modified version of Theorem \ref{thm1}
related to this case we have the following:
\begin{equation}\label{3.8}
\sup_{\substack{A_k^\prime,A_k^{\prime\prime}\in\mathcal{G}\Big(\frac{\frak{g}_1}{p_k},
\frac{2(1-p_k)\frak{g}_1^{2}+p_k\frak{g}_2}{p_k^{2}}\Big)\\\mathcal{K}(A_k^\prime,A_k^{\prime\prime})
\leq\epsilon}}\big|\alpha_{A_k^\prime}-\alpha_{A_k^{\prime\prime}}\big|\leq
\epsilon-\epsilon\ell,
\end{equation}
where $\alpha_{A_k^\prime}$ and $\alpha_{A_k^{\prime\prime}}$ are
the versions of $\alpha_k$ corresponding the probability
distribution functions $A_k^\prime(x)$ and $A_k^{\prime\prime}(x)$
of the class $\mathcal{G}\left(\frac{\frak{g}_1}{p_k},
\frac{2(1-p_k)\frak{g}_1^{2}+p_k\frak{g}_2}{p_k^{2}}\right)$.

Similarly to inequality \eqref{3.1+++}, we have:
\begin{equation}\label{3.9}
\frac{\widehat{A}_k(\mu)}{\alpha_k}\leq1+C\mu\widehat{A}_k^\prime(\mu-\mu\alpha_k^C)\leq
1,
\end{equation}
where $\widehat{A}_k^\prime(\cdot)$ in \eqref{3.9} denotes the
derivative of $\widehat{A}_k(\cdot)$.

Assume now that $A_k(x)\in
\mathcal{G}\left(\frac{\frak{g}_1}{p_k},\frac{2(1-p_k)\frak{g}_1^{2}+p_k\frak{g}_2}{p_k^{2}}\right)$
is unknown, but the first two moments
$\frac{\frak{g}_1}{p_k}$ and
$\frac{2(1-p_k)\frak{g}_1^{2}+p_k\frak{g}_2}{p_k^{2}}$ are given. Assume
that $A_{\mathrm{emp},k}(x)$ is the empirical probability
distribution function corresponding the theoretical probability
distribution function $A_k(x)$, and the Laplace-Stieltjes
transform of $A_{\mathrm{emp},k}(x)$ is denoted by
$\widehat{A}_{\mathrm{emp},k}(s)$, $s\geq0$. Let $\alpha_k^*$
denote the least positive root of the functional equation
$z=\widehat{A}_{\mathrm{emp},k}(\mu-\mu z^C)$. Assume also that
according to available information, Kolmogorov's distance between
$A_{\mathrm{emp},k}(x)$ and $A_k(x)$ is
$\mathcal{K}(A_{\mathrm{emp},k}, A_k)\leq\epsilon$, where similarly to \eqref{CE} $\epsilon$
is assumed to satisfy the inequality
\begin{equation*}\label{CE2}
\begin{aligned}
\epsilon<\min \left\{1-\frac{\frak{g}_1^2}{2(1-p_k)\frak{g}_1^2+p_k\frak{g}_2}, \ \frac{\alpha_k^*-\ell}{1-\ell},\frac{[2(1-p_k)\frak{g}_1^2+p_k\frak{g}_2](1-\alpha_k^*)-\frak{g}_1^2(1-\ell)}
{[2(1-p_k)\frak{g}_1^2+p_k\frak{g}_2](1-\ell)}\right\}.
\end{aligned}
\end{equation*}

Then similarly to \eqref{3.1++++} we have
\begin{equation}\label{3.10}
\frac{\exp\left(-\frac{\mu\frak{g}_1}{p_k}\right)}{\alpha_k^*+\epsilon-\epsilon\ell}\leq
1+C\mu\widehat{A}_{\mathrm{emp},k}^\prime(\mu-\mu(\alpha_k^*)^C)\leq1.
\end{equation}
Therefore, taking into account \eqref{3.9} and \eqref{3.10} for
sufficiently large $N_k$ we arrive at the following lower (denoted
by $\underline{\pi}_k(N_k)$) and upper (denoted by
$\overline{\pi}_k(N_k)$) values for probability $\pi_k$:
$$
\underline{\pi}_k(N_k)=\frac{(1-\rho_k)\exp\left(-\frac{\mu\frak{g}_1}{p_k}\right)(\alpha_k^*-\epsilon+\epsilon\ell)^{N_k}}
{(1-\rho_k)\sum\limits_{i=0}^C(\alpha_k^*+\epsilon-\epsilon\ell)^i-\rho_k\exp\left(-\frac{\mu\frak{g}_1}{p_k}\right)(\alpha_k^*
-\epsilon+\epsilon\ell)^{N_k}},
$$
$$
\overline{\pi}_k(N_k)=\frac{(1-\rho_k)(\alpha_k^*+\epsilon-\epsilon\ell)^{N_k}}
{(1-\rho_k)\sum\limits_{i=0}^C(\alpha_k^*-\epsilon+\epsilon\ell)^i-\rho_k(\alpha_k^*
+\epsilon-\epsilon\ell)^{N_k}}.
$$
Hence, we arrive at the following statement.

\begin{prop}Under the above assumptions given in this section, in the case where
\begin{equation*}
\begin{aligned}
\epsilon<\min \left\{1-\frac{\frak{g}_1^2}{2(1-p_k)\frak{g}_1^2+p_k\frak{g}_2}, \ \frac{\alpha_k^*-\ell}{1-\ell},
\frac{[2(1-p_k)\frak{g}_1^2+p_k\frak{g}_2](1-\alpha_k^*)-\frak{g}_1^2(1-\ell)}
{[2(1-p_k)\frak{g}_1^2+p_k\frak{g}_2](1-\ell)}\right\}.
\end{aligned}
\end{equation*}
for the loss probabilities $\pi_k$, $k=1,2,\ldots,l$, we have:
\begin{equation*}
\begin{aligned}
&\frac{(1-\rho_k)\exp\left(-\frac{\mu\frak{g}_1}{p_k}\right)(\alpha_k^*-\epsilon+\epsilon\ell)^{N_k}}
{(1-\rho_k)\sum\limits_{i=0}^C(\alpha_k^*+\epsilon-\epsilon\ell)^i-\rho_k\exp\left(-\frac{\mu\frak{g}_1}{p_k}\right)(\alpha_k^*
-\epsilon+\epsilon\ell)^{N_k}}\\
&\leq\pi_k
\leq\frac{(1-\rho_k)(\alpha_k^*+\epsilon-\epsilon\ell)^{N_k}}
{(1-\rho_k)\sum\limits_{i=0}^C(\alpha_k^*-\epsilon+\epsilon\ell)^i-\rho_k(\alpha_k^*
+\epsilon-\epsilon\ell)^{N_k}}.
\end{aligned}
\end{equation*}
\end{prop}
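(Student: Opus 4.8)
The plan is to follow the template already carried out for the $GI/M/1/n$ system in Section~\ref{sec3.1}, transcribing it to the more general functional equation $z=\widehat{A}_k(\mu-\mu z^C)$ and to the moments $\frac{\frak{g}_1}{p_k}$ and $\frac{2(1-p_k)\frak{g}_1^2+p_k\frak{g}_2}{p_k^{2}}$ of $A_k(x)$. I would start from the exact asymptotic representation \eqref{3.4} and observe that the only data-dependent quantities appearing in it are the root $\alpha_k$ and the bracketed factor $D_k:=1+C\mu\widehat{A}_k^\prime(\mu-\mu\alpha_k^C)$; the constants $\rho_k$, $C$ and $N_k$ are known. The entire argument then reduces to sandwiching these two quantities between computable extremes and propagating the estimates through \eqref{3.4}.

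First I would control the root. Since $A_k$ and its empirical counterpart $A_{\mathrm{emp},k}$ both lie in $\mathcal{G}\left(\frac{\frak{g}_1}{p_k},\frac{2(1-p_k)\frak{g}_1^2+p_k\frak{g}_2}{p_k^{2}}\right)$ and satisfy $\mathcal{K}(A_{\mathrm{emp},k},A_k)\le\epsilon$, the version of Theorem~\ref{thm1} recorded in \eqref{3.8} gives $|\alpha_k-\alpha_k^*|\le\epsilon-\epsilon\ell$, i.e.
$$
\alpha_k^*-\epsilon+\epsilon\ell\le\alpha_k\le\alpha_k^*+\epsilon-\epsilon\ell.
$$
(The smallness hypothesis on $\epsilon$ in the statement is exactly what guarantees that both endpoints stay between the extremal roots $\ell$ and $\sup\alpha_k$ determined by \eqref{3.6} and \eqref{3.7}, so that \eqref{3.8} is applicable.) Next I would control $D_k$ through \eqref{3.9}: the upper bound $D_k\le1$ is immediate, while for the lower bound I replace $\widehat{A}_k$ by $\widehat{A}_{\mathrm{emp},k}$, bound the numerator $\widehat{A}_k(\mu)$ from below by the extremal Laplace--Stieltjes transform $\exp(-\mu\frak{g}_1/p_k)$ of the class with first moment $\frac{\frak{g}_1}{p_k}$ (cf. \eqref{2.1}), and bound $\alpha_k$ from above by $\alpha_k^*+\epsilon-\epsilon\ell$; this is precisely \eqref{3.10}, yielding $\frac{\exp(-\mu\frak{g}_1/p_k)}{\alpha_k^*+\epsilon-\epsilon\ell}\le D_k\le1$.

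The decisive step is a monotonicity observation. Writing \eqref{3.4}, up to the $o(\alpha_k^{2N_k})$ remainder, as
$$
\pi_k=\frac{(1-\rho_k)M}{(1-\rho_k)S-\rho_k M},\qquad M:=D_k\alpha_k^{N_k},\quad S:=\sum_{i=0}^{C-1}\alpha_k^i,
$$
one checks, with the denominator positive, that $\pi_k$ is strictly increasing in $M$ and strictly decreasing in $S$. Hence a valid upper bound is obtained by inserting the largest admissible $M$ (take $D_k=1$ and $\alpha_k=\alpha_k^*+\epsilon-\epsilon\ell$) together with the smallest admissible $S$ (take $\alpha_k=\alpha_k^*-\epsilon+\epsilon\ell$ in the geometric sum), which reproduces $\overline{\pi}_k(N_k)$; symmetrically, the smallest $M$ (with $D_k=\frac{\exp(-\mu\frak{g}_1/p_k)}{\alpha_k^*+\epsilon-\epsilon\ell}$ and $\alpha_k=\alpha_k^*-\epsilon+\epsilon\ell$) together with the largest $S$ (with $\alpha_k=\alpha_k^*+\epsilon-\epsilon\ell$) gives $\underline{\pi}_k(N_k)$. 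In the lower-bound expression the factor $(\alpha_k^*+\epsilon-\epsilon\ell)$ sitting in the denominator of the estimate for $D_k$ is cleared by multiplying numerator and denominator through by it, which is what rewrites the finite geometric sum into the form displayed in the statement. Since these are asymptotic assertions, I would finish by noting that the $o(\alpha_k^{2N_k})$ term is dominated by the leading $\alpha_k^{N_k}$ contribution once $N_k$ is large, so that the two inequalities hold for all sufficiently large $N_k$.

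I expect the main obstacle to be the bookkeeping in the monotonicity step: because $\alpha_k$ enters \eqref{3.4} simultaneously through the power $\alpha_k^{N_k}$, through the geometric sum $S$, and (via \eqref{3.10}) through the lower estimate for $D_k$, one must verify that each extreme substitution really pushes the whole fraction in the intended direction rather than producing competing effects. The clean resolution is the separation into the two monovariate arguments $M$ and $S$ above, after which each replacement is unambiguous; the only residual care is the algebraic rearrangement that clears the $D_k$-denominator and realigns the indices of the finite geometric sum, which I would carry out explicitly to match the displayed bounds.
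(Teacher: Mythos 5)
Your strategy --- bound $\alpha_k$ via \eqref{3.8}, bound $D_k=1+C\mu\widehat{A}_k^\prime(\mu-\mu\alpha_k^C)$ via \eqref{3.9}--\eqref{3.10}, and push the extreme values through \eqref{3.4} using monotonicity in $M=D_k\alpha_k^{N_k}$ and $S=\sum_{i=0}^{C-1}\alpha_k^i$ --- is exactly the argument the paper intends (the paper's own ``proof'' consists of displaying \eqref{3.9} and \eqref{3.10} and asserting the bounds), and your decomposition into the two monovariate arguments $M$ and $S$ is correct and in fact more careful than the paper.

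However, your final claim that these substitutions ``reproduce'' the displayed bounds is not exact, and the discrepancy is a genuine gap with respect to the statement as written. Carrying out your own prescription, the lower bound emerges with $(1-\rho_k)\sum_{i=1}^{C}(\alpha_k^*+\epsilon-\epsilon\ell)^i$ in the denominator (after clearing the factor $\alpha_k^*+\epsilon-\epsilon\ell$), and the upper bound emerges with $(1-\rho_k)\sum_{i=0}^{C-1}(\alpha_k^*-\epsilon+\epsilon\ell)^i$; the statement displays $\sum_{i=0}^{C}$ in both places. For the lower bound this is harmless: enlarging the geometric sum enlarges the denominator and only weakens the bound, so the stated inequality follows from yours --- but you must say so explicitly, since the expressions do not coincide. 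For the upper bound the mismatch goes the wrong way: the stated denominator is larger, so the stated bound is strictly tighter than the one your substitutions produce, and the monotone-substitution method cannot recover it. Indeed, the method requires the domination $S\geq\sum_{i=0}^{C}(\alpha_k^*-\epsilon+\epsilon\ell)^i$, which the hypotheses do not guarantee: if the true root $\alpha_k$ lies near the top of the interval permitted by \eqref{3.8}, then already for $C=1$ one has $S=1$ while $\sum_{i=0}^{1}(\alpha_k^*-\epsilon+\epsilon\ell)^i=1+\alpha_k^*-\epsilon+\epsilon\ell>1$. Consistently with this, specializing the stated upper bound to $C=1$ contradicts \eqref{3.3}, which indicates that the upper summation limit $C$ in the proposition is an indexing slip rather than something provable. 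The honest completion of your proof is therefore to state and prove the bounds with the sums $\sum_{i=1}^{C}$ (lower) and $\sum_{i=0}^{C-1}$ (upper), note that the displayed lower bound follows a fortiori, and flag that the displayed upper bound needs the corresponding correction.
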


\section{Continuity of the loss probability in the $M/M/1/n$ queueing
 system}\label{sec4} The results of Section \ref{sec2} enable us to
establish continuity of the $M/M/1/n$ queueing system when $n$ is
large. The continuity of the $M/M/1/n$ queueing system was studied
in \cite{Abramov 2008}. In contrast to \cite{Abramov 2008} where by
continuity of $M/M/1/n$ queueing system it is meant the continuity
of a $M/GI/1/n$ queueing system, which is close to the $M/M/1/n$
queueing system, in the present paper by continuity of the $M/M/1/n$
queueing system it is meant the continuity of a $GI/M/1/n$ queueing
system, which is close to that $M/M/1/n$ queueing system. Then, in
the case when parameter $n$ is large, the analysis becomes much
simpler compared to the case when $n$ is not assumed to be large.
(In \cite{Abramov 2008} Conditions (A) and (B) mentioned below are
applied to the probability distribution function of a service time.)

Our assumptions here are similar to those of \cite{Abramov 2008}.
Let $A(x)$ denote probability distribution function of
interarrival time, which slightly differs from the exponential
distribution $E_\lambda(x)=1-\mathrm{e}^{-\lambda x}$ as indicated
in the cases below.

\smallskip

$\bullet$ Condition (A). The probability distribution function
$A(x)$ has the representation
\begin{equation}\label{4.1}
A(x)=pF(x)+(1-p)E_\lambda(x), \ 0<p\leq1,
\end{equation}
where $F(x)=\mathrm{Pr}\{\zeta\leq x\}$ is a probability
distribution function of a nonnegative random variable having the
expectation $\frac{1}{\lambda}$, and
\begin{equation}\label{4.2}
\sup_{x,y\geq0}\left|F_y(x)-F(x)\right|<\epsilon, \ \epsilon>0,
\end{equation}
where $F_y(x)=\mathrm{Pr}\{\zeta\leq x+y|\zeta>y\}$. Relation
\eqref{4.2} says that the distance in Kolmogorov's metric
between $F(x)$ and $E_\lambda(x)$, according to the
characterization theorem of Azlarov and Volodin \cite{Azlarov and
Volodin 1986} (see also \cite{Abramov 2008}), is not greater than
$2\epsilon$.

\smallskip
$\bullet$ Condition (B). Along with \eqref{4.1} and \eqref{4.2} it
is assumed that $F(x)$ belongs either to the class NBU or to the
class NWU.

Recall that a probability distribution function $\Xi(x)$ of a
nonnegative random variable is said to belong to the class NBU if
for all $x\geq0$ and $y\geq0$ we have
$\overline{\Xi}(x+y)\leq\overline{\Xi}(x)\overline{\Xi}(y)$, where
$\overline{\Xi}(x)=1-{\Xi}(x)$. If the opposite inequality holds,
i.e. $\overline{\Xi}(x+y)\geq\overline{\Xi}(x)\overline{\Xi}(y)$,
then $\Xi(x)$ is said to belong to the class NWU.

\smallskip
Under both of these Conditions (A) and (B) we assume that
$\mathrm{E}\zeta^{2}<\infty$ is given.

\medskip
Under Condition (A), we have
\begin{equation}\label{4.4}
\begin{aligned}
\sup_{x>0}\left|A(x)-E_\lambda(x)\right|&=\sup_{x>0}\left|pF(x)-(1-p)E_\lambda(x)-E_\lambda(x)\right|\\
&=p\sup_{x>0}\left|F(x)-E_\lambda(x)\right|.
\end{aligned}
\end{equation}
According to the aforementioned characterization theorem of
Azlarov and Volodin,
$$
\sup_{x>0}\left|F(x)-E_\lambda(x)\right|<2\epsilon.
$$
Therefore, from \eqref{4.4} we obtain
\begin{equation}\label{4.5}
\sup_{x>0}\left|A(x)-E_\lambda(x)\right|<2p\epsilon.
\end{equation}
We also have:
$$
\int_0^\infty
x^{2}\mathrm{d}A(x)=p\mathrm{E}\zeta^{2}+\frac{2(1-p)}{\lambda^{2}}.
$$
Apparently,
$\mathrm{E}\zeta^{2}\geq(\mathrm{E}\zeta)^{2}=\frac{1}{\lambda^{2}}$.
Denote $\mathrm{E}\zeta^{2}=\sigma^{2}+\frac{1}{\lambda^{2}}$,
assuming that $\sigma^2>\frac{1}{\lambda^{2}}$. Thus, it is assumed that $\mathrm{E}\zeta^{2}>\frac{2}{\lambda^{2}}$.


Now one can apply the estimate given by \eqref{T1} in Theorem \ref{thm1}, to obtain
continuity bounds for the loss probability in the case of large
$n$. In this estimate, $\ell$ is the least positive root of the
equation
$z=\exp\left(-\frac{\mu}{\lambda}+\frac{\mu}{\lambda}z\right)$. It is not difficult to check that
the least positive root of this functional equation is $\rho=\frac{\lambda}{\mu}$, and, because of the assumption $\sigma^2>\frac{1}{\lambda^2}$, the value $\rho$ is within the bounds:
\begin{equation}\label{c1}
\ell\leq\rho\leq1+\frac{\frac{1}{\lambda^{2}}}{\sigma^{2}+\frac{1}{\lambda^{2}}}(\ell-1).
\end{equation}
%
%
%

So, keeping in mind the relation \eqref{3.3+} for the loss probability, where in the given case $\alpha^*$ is replaced by $\rho$, we arrive at the following statement.

\begin{prop}
Under Condition (A) and under the assumption $\sigma^2>\frac{1}{\lambda^{2}}$
the following inequalities for the loss probability, as $n\to\infty$, hold:
\begin{equation*}\label{4.3+}
\begin{aligned}
&\frac{(1-\rho)\mathrm{e}^{-\frac{1}{\rho}}(\rho-2p\epsilon_1+2p\epsilon_1\ell)^n}
{(1-\rho)(\rho+2p\epsilon_1-2p\epsilon_1\ell)-\rho\mathrm{e}^{-\frac{1}{\rho}}(\rho-2p\epsilon_1+2p\epsilon_1\ell)^n}\\
&\leq P_{\mathrm{loss}}(n)\\ &\leq
\frac{(1-\rho)(\rho+2p\epsilon_2-2p\epsilon_2\ell)^n}
{1-\rho-\rho(\rho+2p\epsilon_2-2p\epsilon_2\ell)^n},
\end{aligned}
\end{equation*}
where 
$$
\epsilon_1=\min\left\{\rho-\ell, 2p\epsilon(1-\ell)\right\},
$$
and
$$
\epsilon_2=\min\left\{1+\frac{1}{1+\lambda^{2}\sigma^{2}}(\ell-1)-\rho, 2p\epsilon(1-\ell)\right\}.
$$
\end{prop}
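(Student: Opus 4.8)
The plan is to specialize the $GI/M/1/n$ bounds \eqref{3.3+} of Section~\ref{sec3.1} to the present situation, playing off the exponential law $E_\lambda$ against the true interarrival law $A$. Under Condition~(A) we have $A=pF+(1-p)E_\lambda$, and \eqref{4.4} together with the characterization theorem of Azlarov and Volodin already gives the Kolmogorov bound $\sup_{x>0}|A(x)-E_\lambda(x)|<2p\epsilon$, recorded in \eqref{4.5}. Hence $E_\lambda$ takes the role of the ``known'' distribution, its least positive root $\rho=\lambda/\mu$ replacing $\alpha^*$, while $A$ plays the role of the ``unknown'' law, and the effective perturbation level is $2p\epsilon$ rather than $\epsilon$.

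First I would record the moments: both $A$ and $E_\lambda$ have first moment $\frak{g}_1=1/\lambda$, so the degenerate minorant \eqref{2.1} is common to the two and the minimal root $\ell$, being the least positive root of $z=\exp(-\frac{\mu}{\lambda}+\frac{\mu}{\lambda}z)$, depends only on $1/\lambda$ and $\mu$. Since $A$ and $E_\lambda$ share $\frak{g}_1$ but differ in second moment (that of $A$ being $p\mathrm{E}\zeta^2+2(1-p)/\lambda^2>2/\lambda^2$ under $\sigma^2>1/\lambda^2$), the pertinent estimate is not Theorem~\ref{thm1} verbatim but its wider-class form carried out in \eqref{2.1'}--\eqref{2.28}, i.e. the bound over $\mathcal{G}(\mathcal{M})$, which requires only the common first moment. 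Applying it with perturbation $2p\epsilon$ yields
\begin{equation*}
|\alpha-\rho|\leq 2p\epsilon(1-\ell),
\end{equation*}
$\alpha$ being the least positive root of $z=\widehat{A}(\mu-\mu z)$. The admissibility of $\rho$ as a centre for this two-sided estimate is exactly \eqref{c1}: the minorant places $\rho\geq\ell$, while the upper extremal root $1+\frac{1}{1+\lambda^2\sigma^2}(\ell-1)$, computed from the second moment $\sigma^2+1/\lambda^2$, strictly exceeds $\rho$ precisely because $\sigma^2>1/\lambda^2$.

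With this in hand I would insert the two-sided estimate into \eqref{3.3+}, replacing $\alpha^*$ by $\rho$, the factor $\epsilon(1-\ell)$ by $2p\epsilon(1-\ell)$, and $\mathrm{e}^{-\mu\frak{g}_1}$ by $\mathrm{e}^{-1/\rho}$ since $\mu\frak{g}_1=\mu/\lambda=1/\rho$. To keep the shifted roots inside the admissible interval $[\ell,\,1+\frac{1}{1+\lambda^2\sigma^2}(\ell-1)]$ delivered by \eqref{c1}, the downward shift must be truncated at $\rho-\ell$ and the upward shift at $1+\frac{1}{1+\lambda^2\sigma^2}(\ell-1)-\rho$; these truncations are exactly what the quantities $\epsilon_1$ and $\epsilon_2$ encode, and substituting the truncated shifts into the lower and upper branches of \eqref{3.3+} gives the two displayed inequalities. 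The main obstacle is this admissibility step: because $A$ and $E_\lambda$ do not sit in a single $\mathcal{G}(\frak{g}_1,\frak{g}_2)$, one must justify the cross-class bound through $\mathcal{G}(\mathcal{M})$ and verify that $\rho$ together with both of its perturbations stays in $[\ell,\,1+\frac{1}{1+\lambda^2\sigma^2}(\ell-1)]$; once that is secured the remaining substitutions are routine.
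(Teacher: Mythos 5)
Your proposal is correct and follows essentially the same route as the paper's own argument: the Azlarov--Volodin characterization gives the Kolmogorov bound $2p\epsilon$ of \eqref{4.5}, the root-perturbation estimate \eqref{T1} then gives $|\alpha-\rho|\leq 2p\epsilon(1-\ell)$, relation \eqref{c1} supplies the admissible interval for $\rho$, and substitution into \eqref{3.3+} with $\alpha^*$ replaced by $\rho$ and the shifts truncated via $\epsilon_1,\epsilon_2$ yields the stated bounds. Your extra care in routing the estimate through the wider class $\mathcal{G}(\mathcal{M})$ (because $A$ and $E_\lambda$ share the first moment $1/\lambda$ but not the second moment) makes explicit a step the paper passes over silently when it simply invokes Theorem \ref{thm1}, but this is the same Section \ref{sec2} machinery rather than a genuinely different argument.
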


Under Condition (B) we have \eqref{4.4}, where under the
additional assumption that $F(x)$ belongs either to the class NBU
or to the class NWU one should apply Lemma 3.1 of \cite{Abramov
2008} rather then the characterization theorem of Azlarov and
Volodin \cite{Azlarov and Volodin 1986}, \cite{Abramov 2008}. In
this case we have
$$
\sup_{x>0}\left|F(x)-E_\lambda(x)\right|<\epsilon.
$$
Therefore, from \eqref{4.4} we obtain
\begin{equation*}\label{4.5+}
\sup_{x>0}\left|A(x)-E_\lambda(x)\right|<p\epsilon.
\end{equation*}

In this case we have the following statement.

\begin{prop}
Under Condition (B) and under the assumption $\sigma^2>\frac{1}{\lambda^{2}}$
the following inequalities for the loss probability, as $n\to\infty$, hold:
\begin{equation*}\label{4.4+}
\begin{aligned}
&\frac{(1-\rho)\mathrm{e}^{-\frac{1}{\rho}}(\rho-p\epsilon_3+p\epsilon_3\ell)^n}
{(1-\rho)(\rho+p\epsilon_3-p\epsilon_3\ell)-\rho\mathrm{e}^{-\frac{1}{\rho}}(\rho-p\epsilon_3+p\epsilon_3\ell)^n}\\
&\leq P_{\mathrm{loss}}(n)\\ &\leq
\frac{(1-\rho)(\rho+p\epsilon_4-p\epsilon_4\ell)^n}
{1-\rho-\rho(\rho+p\epsilon_4-p\epsilon_4\ell)^n},
\end{aligned}
\end{equation*}
where
$$
\epsilon_3=\min\left\{\rho-\ell, p\epsilon(1-\ell)\right\},
$$
and
$$
\epsilon_4=\min\left\{1+\frac{1}{1+\lambda^{2}\sigma^{2}}(\ell-1)-\rho, p\epsilon(1-\ell)\right\}.
$$
\end{prop}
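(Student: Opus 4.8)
The plan is to reuse, essentially verbatim, the derivation behind the preceding proposition (Condition (A)), changing only the constant in the Kolmogorov distance from $2p\epsilon$ to $p\epsilon$. That sharper estimate is already in hand: the excerpt records $\sup_{x>0}|A(x)-E_\lambda(x)|<p\epsilon$, obtained from \eqref{4.4} by invoking Lemma 3.1 of \cite{Abramov 2008} (legitimate precisely because $F(x)$ is now assumed to be NBU or NWU) in place of the Azlarov--Volodin characterization. So the single new input is this factor, and everything downstream is the substitution $2p\epsilon\mapsto p\epsilon$ in the chain \eqref{3.1}--\eqref{3.3+}.

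First I would fix the dictionary between the present continuity setting and the $GI/M/1/n$ analysis of Section \ref{sec3.1}. The reference system is $M/M/1/n$, whose interarrival law is $E_\lambda(x)$ and whose root of $z=\widehat{E}_\lambda(\mu-\mu z)$ is $\rho=\lambda/\mu$; thus $\rho$ plays the role of $\alpha^*$, while $\mathrm{e}^{-\mu\frak{g}_1}=\mathrm{e}^{-1/\rho}$ (since $\mu\frak{g}_1=\mu/\lambda=1/\rho$) and $\ell$ is the least root of $z=\mathrm{e}^{-(\mu/\lambda)(1-z)}$. The perturbed system is the $GI/M/1/n$ queue with interarrival distribution $A(x)$ and root $\alpha$. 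Because $A(x)$ and $E_\lambda(x)$ share the first moment $\frak{g}_1=1/\lambda$ but in general differ in their second moments, Theorem \ref{thm1} must be invoked in its $\mathcal{G}(\mathcal{M})$ form: both laws sit in the union $\mathcal{G}(\mathcal{M})$ of classes with common first moment $1/\lambda$, and the relevant $\ell$ depends on that shared moment alone. With Kolmogorov distance below $p\epsilon$, estimate \eqref{T1} then delivers $|\alpha-\rho|\leq p\epsilon(1-\ell)$.

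Next I would clip this deviation against Rolski's interval. By \eqref{c1}, and precisely because $\sigma^2>1/\lambda^2$ places the exponential's second moment $2/\lambda^2$ strictly below $\frak{g}_2=\sigma^2+1/\lambda^2$, both $\rho$ and $\alpha$ lie in $[\ell,\,1+\frac{1}{1+\lambda^2\sigma^2}(\ell-1)]$. Hence the downward deviation obeys $\rho-\alpha\leq\epsilon_3=\min\{\rho-\ell,\ p\epsilon(1-\ell)\}$ and the upward deviation obeys $\alpha-\rho\leq\epsilon_4=\min\{1+\frac{1}{1+\lambda^2\sigma^2}(\ell-1)-\rho,\ p\epsilon(1-\ell)\}$, so that $\rho-\epsilon_3\leq\alpha\leq\rho+\epsilon_4$. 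These clipped perturbations keep $\alpha$ inside that interval, which is exactly what makes the extremal substitutions of Section \ref{sec3.1} legitimate here.

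Finally I would substitute into the loss formula. Combining $\rho-\epsilon_3\leq\alpha\leq\rho+\epsilon_4$ with the two-sided estimate \eqref{3.1+++}, in the form $\mathrm{e}^{-1/\rho}/\alpha\leq\widehat{A}(\mu)/\alpha\leq 1+\mu\widehat{A}'(\mu-\mu\alpha)\leq1$ (the left inequality using $\widehat{A}(\mu)\geq\mathrm{e}^{-\mu\frak{g}_1}$ from \eqref{2.1}), and exploiting that $t\mapsto(1-\rho)t/(1-\rho-\rho t)$ is increasing, one lower-bounds $P_{\mathrm{loss}}(n)$ by the minimal numerator factor $\mathrm{e}^{-1/\rho}$ together with the minimal root $\rho-\epsilon_3$, and upper-bounds it by the maximal factor $1$ together with the maximal root $\rho+\epsilon_4$. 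This reproduces the structure of \eqref{3.2}--\eqref{3.3} with $\alpha^*=\rho$, $\mathrm{e}^{-\mu\frak{g}_1}=\mathrm{e}^{-1/\rho}$, and the clipped perturbations $\epsilon_3,\epsilon_4$, which are the asserted inequalities. The one genuinely delicate point is the second step: one must justify applying Theorem \ref{thm1} to the pair $(A,E_\lambda)$ despite their unequal second moments, and it is exactly the $\mathcal{G}(\mathcal{M})$ extension together with the hypothesis $\sigma^2>1/\lambda^2$ (which guarantees both laws belong to the common family) that secures this; the remaining steps are the routine monotonicity bookkeeping already performed in Section \ref{sec3.1}.
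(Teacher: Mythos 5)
Your overall route is the paper's own: under Condition (B) one invokes Lemma 3.1 of \cite{Abramov 2008} in place of the Azlarov--Volodin theorem to sharpen \eqref{4.4} to $\sup_{x>0}|A(x)-E_\lambda(x)|<p\epsilon$, and then repeats the Condition (A) derivation verbatim (Theorem \ref{thm1} applied to the pair $(A,E_\lambda)$, Rolski's window \eqref{c1}, monotone substitution into \eqref{3.2}--\eqref{3.3+}) with $2p\epsilon$ replaced by $p\epsilon$. At one point you are in fact more careful than the paper: the observation that $A$ and $E_\lambda$ have unequal second moments, so that Theorem \ref{thm1} must be read in its $\mathcal{G}(\mathcal{M})$ form --- with $\sigma^{2}>1/\lambda^{2}$ guaranteeing that both laws, whose second moments are $p\mathrm{E}\zeta^{2}+2(1-p)/\lambda^{2}$ and $2/\lambda^{2}$, fall under the common umbrella attached to $(\frak{g}_1,\frak{g}_2)=(1/\lambda,\sigma^{2}+1/\lambda^{2})$ --- is exactly the justification the paper leaves implicit.

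The one genuine defect is your closing claim that the window $\rho-\epsilon_3\leq\alpha\leq\rho+\epsilon_4$ ``reproduces \dots the asserted inequalities.'' Read literally, it does not. Your (correctly derived) argument puts the roots $\rho-\epsilon_3$ and $\rho+\epsilon_4$ into \eqref{3.2}--\eqref{3.3}, whereas the Proposition's displays use the roots $\rho-p\epsilon_3(1-\ell)$ and $\rho+p\epsilon_4(1-\ell)$. Since $\epsilon_3,\epsilon_4$ are defined as already-clipped \emph{root deviations}, multiplying them once more by $p(1-\ell)<1$ shrinks the window: $\rho-p\epsilon_3(1-\ell)>\rho-\epsilon_3$ and $\rho+p\epsilon_4(1-\ell)<\rho+\epsilon_4$ whenever $\epsilon_3,\epsilon_4>0$, so the displayed bounds are strictly \emph{stronger} than anything your argument (or the paper's own sketch) establishes; for instance, when $\epsilon_3=p\epsilon(1-\ell)$ the display requires $\alpha\geq\rho-p^{2}\epsilon(1-\ell)^{2}$, while Theorem \ref{thm1} only yields $\alpha\geq\rho-p\epsilon(1-\ell)$. (A secondary mismatch: the lower bound's denominator in the display carries the $\epsilon_3$-clip where your derivation, like \eqref{3.2}, requires an \emph{upper} bound on the root, i.e.\ the $\epsilon_4$ side.) This double application of $p(1-\ell)$ is inherited from the paper itself --- the Condition (A) proposition has the same flaw with $2p$ --- and consistency is restored either by redefining $\epsilon_3=\min\bigl\{\tfrac{\rho-\ell}{p(1-\ell)},\,\epsilon\bigr\}$ and $\epsilon_4=\min\bigl\{\tfrac{u-\rho}{p(1-\ell)},\,\epsilon\bigr\}$ with $u:=1+\tfrac{1}{1+\lambda^{2}\sigma^{2}}(\ell-1)$, so that $p\epsilon_i(1-\ell)$ \emph{is} the clipped deviation, or by keeping the stated $\epsilon_3,\epsilon_4$ and writing the roots as $\rho-\epsilon_3$ and $\rho+\epsilon_4$, which is what you actually proved. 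You should have flagged this discrepancy rather than asserting the formulas coincide.
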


\section*{Acknowledgement}
The author acknowledges with thanks the support of the Australian
Research Council, grant \#DP0771338. The author thanks Prof. Kais Hamza for a fruitful discussion of the results and bringing to the attention of the author an alternative way of the proof of Lemmas \ref{lem1} and \ref{cor1}.


\end{document}